\documentclass[12pt, twoside, leqno]{article}


\usepackage{amsmath,amsthm}
\usepackage{amssymb}


\pagestyle{myheadings}
\markboth{I. O. Weinstein}{On the Pointwise Lyapunov Exponent of Holomorphic Maps}



\newtheorem{theorem}{Theorem}[section]
\newtheorem{corollary}[theorem]{Corollary}

\newtheorem{claim}[theorem]{Claim}



\theoremstyle{definition}
\newtheorem{definition}[theorem]{Definition}
\newtheorem{remark}[theorem]{Remark}


\newtheorem*{xthrm}{Theorem}


\numberwithin{equation}{section}


\frenchspacing

\textwidth=13.5cm
\textheight=23cm
\parindent=16pt
\oddsidemargin=-0.5cm
\evensidemargin=-0.5cm
\topmargin=-0.5cm


\begin{document}


\baselineskip=17pt


\title{On the Pointwise Lyapunov Exponent of Holomorphic Maps}

\author{Israel Or Weinstein}

\date{}

\maketitle


\renewcommand{\thefootnote}{}

\footnote{2010 \emph{Mathematics Subject Classification}: Primary 37F10, 37F15, 37F50.}

\footnote{\emph{Key words and phrases}: Lyapunov exponent, holomorphic dynamics, covering map, map of bounded type.}

\renewcommand{\thefootnote}{\arabic{footnote}}
\setcounter{footnote}{0}


\begin{abstract}
We prove that for any holomorphic map, and any bounded orbit which
does not accumulate to a singular set nor to an attracting cycle,
its lower Lyapunov exponent is non-negative. The same result holds
for unbounded orbits too, for maps with a bounded singular set. Furthermore,
the orbit may accumulate to infinity or a singular set, as long as
it is slow enough.
\end{abstract}

\section{Introduction}
An important characteristic of a chaotic system is its
sensitivity to initial conditions. A quantitative measure of this phenomenon
is a positive Lyapunov exponent of an orbit in a dynamical system. In this paper we study the Lyapunov exponent of holomorphic
dynamical systems: Let $f:V\to V'$ be a holomorphic map between open
sets $V\subseteq V'\subseteq\mathbb{C}$. For every initial point
$z_{0}\in V$, as long as it is well-defined, we call $\{z_{0},f(z_{0}),f^{2}(z_{0})...\}$
the orbit of $z_{0}$ under $f$ and denote $z_{n}=f^{n}(z_{0})$.
The\textbf{ lower Lyapunov exponent }of $f$ at the point $z_{0}$
is defined by

\[
\underline{\chi}_{f}(z_{0})=\liminf_{n\to\infty}\frac{1}{n}\sum_{i=0}^{n-1}\log|f'(z_{i})|
\]
Any point that belongs to the basin of an attracting cycle has a negative
Lyapunov exponent.

It is known that the existence of singular values has a significant
influence on the complexity of the dynamical system. The most simple,
and most broadly studied, holomorphic dynamical systems are those
with one singular value: unicritical polynomial maps $z\to z^{d}+c$
and exponential maps $z\rightarrow ae^{z}+c$. For these families
of maps G. Levin, F. Przytycki and W. Shen \cite{levin2014lyapunov}
proved that $\underline{\chi}_{f}(c)\ge0$. More generally, they proved:
\begin{xthrm}[{\cite[Theorem 1.3]{levin2014lyapunov}}]
 Let $f:V\to V'$ be a holomorphic map between open sets $V\subseteq V'\subseteq\mathbb{C}$.
Assume there is a unique point $c\in V'$ such that $f:V\backslash f^{-1}(c)\to V'\backslash\{c\}$
is an unbranched covering map. Assume the orbit $z_{0}=c,\ z_{1}=f(z_{0}),\dots$
is well-defined and $B(z_{i},\delta)\subseteq V'$ for every $i\ge0$
and some $\delta>0$. If $c$ does not belong to the basin of an attracting
cycle then $\underline{\chi}_{f}(c)\ge0$.
\end{xthrm}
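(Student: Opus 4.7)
The plan is to argue by contradiction: suppose $\underline{\chi}_f(c) = -3\alpha$ for some $\alpha > 0$, so along a subsequence $n_k \to \infty$ one has $|(f^{n_k})'(c)| \leq e^{-3\alpha n_k}$. From this I will construct an attracting periodic orbit of $f$ whose basin contains $c$, contradicting the hypothesis.

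First I would dispose of the non-recurrent case via the hyperbolic metric of $D := V' \setminus \{c\}$. Since $f : V \setminus f^{-1}(c) \to D$ is an unbranched covering and $V \setminus f^{-1}(c) \subseteq D$, Pick's theorem yields $|f'(z)|_{\rho_D} \geq 1$ wherever $f$ is defined. If the orbit $\{z_n\}$ stayed in a compact subset $K \subseteq D$, the telescoping
\[\prod_{i<N}|f'(z_i)| \;=\; \frac{\rho_D(z_0)}{\rho_D(z_N)}\prod_{i<N}|f'(z_i)|_{\rho_D}\]
would be bounded below by a positive constant, giving $\underline{\chi}_f(c) \geq 0$ at once. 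So I may assume the orbit has a subsequence accumulating to $c$.

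In this recurrent case I would work with pullback neighborhoods. For each $n$, let $W^{(n)}_j$ denote the connected component of $f^{j-n}\bigl(B(z_n,\delta)\bigr)$ containing $z_j$. By the covering hypothesis, each step $W^{(n)}_{j-1} \to W^{(n)}_j$ is unbranched except at those $j$ with $c \in W^{(n)}_j$, where it branches only over $c$. Applying Koebe's distortion theorem on the maximal univalent subintervals of the pullback, together with a bounded-degree Koebe estimate across the branched steps, one bounds the Euclidean size of $W^{(n)}_0$ from below in terms of $|(f^n)'(c)|$ and the total branching degree $D_n := \deg\bigl(f^n : W^{(n)}_0 \to B(z_n,\delta)\bigr)$: specifically, $W^{(n_k)}_0$ should contain a disk $B(c,r_k)$ with $r_k \gtrsim \delta \cdot |(f^{n_k})'(c)|^{-1/D_{n_k}}$. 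A telescoping estimate organized around the closest return times of the orbit to $c$ controls $D_{n_k}$ so that $r_k \to \infty$.

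To close the contradiction, I would choose $n_k$ large enough that $z_{n_k}$ is much closer to $c$ than $r_k$, so that $B(z_{n_k},\delta) \Subset B(c,r_k) \subseteq W^{(n_k)}_0$ and hence $f^{n_k}\bigl(W^{(n_k)}_0\bigr) \Subset W^{(n_k)}_0$. The Schwarz--Pick lemma then supplies a unique attracting fixed point of $f^{n_k}$ inside $W^{(n_k)}_0$; since $c \in W^{(n_k)}_0$, the point $c$ lies in its basin, which is the sought contradiction. The hardest step is the middle one: bounding $D_{n_k}$ so that the exponent $1/D_{n_k}$ does not kill the gain from the exponentially small $|(f^{n_k})'(c)|$. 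This delicate bookkeeping requires organizing the pullback around a nest of first-return neighborhoods of $c$ and combining a distortion estimate on the univalent pieces with a comparison of Euclidean and hyperbolic scales at each close approach to $c$.
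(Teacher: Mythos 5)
Your strategy is genuinely different from the paper's, and as written it has two real gaps.

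The paper (following Levin--Przytycki--Shen) never argues by contradiction from a hypothetical negative exponent and never deals with branched pullbacks. It fixes $n$, defines for each $0\le i\le n$ the \emph{largest} radius $\tau_i\le\delta$ such that $f^{n-i}$ maps a neighborhood $U_i\ni z_i$ \emph{univalently} onto $B(z_n,\tau_i)$, and bounds $\log(\tau_n/\tau_0)=\sum_i m_i$ where $m_i=\log(\tau_{i+1}/\tau_i)$. Each nonzero $m_i$ is the modulus of the annulus $U_{i+1}\setminus f(U_i)$, whose inner boundary component passes through a singular value; Teichm\"uller's modulus theorem plus a packing estimate for the points $z_{i+1}$ then bounds how many indices $i$ can have $m_i\ge m$. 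This sidesteps branching entirely. Your route instead pulls back the \emph{full} component $W_0^{(n)}$, which is branched, and tries to trade $|(f^n)'(c)|$ against the degree $D_n$; this is a different and, I think, harder accounting problem.

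The first gap is the dichotomy you use to reduce to the recurrent case. You claim that if the orbit does not accumulate at $c$ then it stays in a compact subset of $D=V'\setminus\{c\}$, so that Pick's inequality in $\rho_D$ and boundedness of $\rho_D(z_0)/\rho_D(z_N)$ close the argument. But the hypotheses allow the orbit to escape to infinity (the theorem covers, e.g., exponential maps), in which case $\rho_D(z_N)\to 0$ and your telescoped ratio degenerates. Worse, when $V'=\mathbb{C}$ the domain $D=\mathbb{C}\setminus\{c\}$ is not hyperbolic at all, so $\rho_D$ does not exist; and $z_0=c\notin D$, so $\rho_D(z_0)$ is undefined in any case. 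So the ``non-recurrent'' branch of your argument simply does not go through under the stated hypotheses, and the reduction to the recurrent case is unjustified.

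The second gap is the one you flag yourself: you assert $r_k\gtrsim\delta\,|(f^{n_k})'(c)|^{-1/D_{n_k}}$ and that ``a telescoping estimate organized around the closest return times\dots controls $D_{n_k}$,'' but you do not produce that estimate. Controlling $D_{n_k}$ (the number of times the branched pullback encounters $c$, weighted by local degrees) is exactly where the difficulty of the theorem lives; without a concrete argument that $D_{n_k}$ grows slowly enough relative to $n_k$, the inequality $B(z_{n_k},\delta)\Subset W_0^{(n_k)}$ and the ensuing Schwarz contradiction are not established. I'd also note that $W_0^{(n_k)}\subseteq V$, so if $V$ is bounded the claim $r_k\to\infty$ is literally impossible; the correct target is only $r_k>\delta+|z_{n_k}-c|$, which is weaker but still requires a degree bound you have not supplied. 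The Schwarz step itself is fine and is essentially the same device the paper uses inside its Claim~\ref{claim:E}; what is missing is the quantitative modulus/packing machinery (the paper's Teichm\"uller estimate and the bound on $F_{z_0,n}$) that replaces your degree bookkeeping.
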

To prove this they used a telescopic-like construction (detailed in
Section \ref{sec:Proof-of-Theorem} of the present paper) to estimate
the derivatives $\left(f^{n}\right)^{\prime}(z_{0})$ as a function
of $n$ and $\delta$. In this paper we further develop the argument
of \cite[Lemma 2.2]{levin2014lyapunov} to get a better estimate for
the lower bound of these derivatives. Our estimate holds even for
a map with an arbitrary singular set, under the assumption that the
orbit does not accumulate to this set:
\begin{theorem}  
\label{thm:MainResult}Let $f:V\to V'$ be a holomorphic map between
open sets $V\subseteq V'\subseteq\mathbb{C}$. Let $\mathcal{S}\subset V^{\prime}$
be a relatively closed set such that $f:V\backslash f^{-1}(\mathcal{S})\to V'\backslash\mathcal{S}$
is an unbranched covering map\footnote{One can always take $\text{S=sing}(f^{-1})=\overline{\{s|s\text{ is a critical or asymptotic value of }f\}}$.}.
Let $z_{0}\in V$ be a point with a well-defined orbit $z_{0},z_{1}=f(z_{0}),\dots$
such that $B(z_{i},\delta)\subseteq V'\backslash\mathcal{S}$ for
every $i\ge0$ and some $\delta>0$. Assuming that either $\left\{ z_{i}\right\} _{i=0}^{\infty}$
or $\mathcal{S}$ is bounded, if $z_{0}$ does not belong to the basin
of an attracting cycle then $\underline{\chi}_{f}(z_{0})\ge0$.
\end{theorem}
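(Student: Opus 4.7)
The plan is to argue by contradiction and to refine the telescope technique of \cite[Lemma~2.2]{levin2014lyapunov}, replacing the role of the single critical value $c$ there by the uniform $\delta$-separation of the orbit from $\mathcal{S}$.

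First, I would set up univalent pullbacks. Assume for contradiction that $\underline{\chi}_f(z_0) = -2\chi < 0$, so that along some subsequence $n_k \to \infty$ one has $|(f^{n_k})'(z_0)| \le e^{-\chi n_k}$. For every $n \ge 0$, the ball $B(z_n, \delta)$ is simply connected and contained in $V' \setminus \mathcal{S}$; since $f : V \setminus f^{-1}(\mathcal{S}) \to V' \setminus \mathcal{S}$ is an unbranched cover, an inductive application of the covering-space lifting criterion produces a univalent inverse branch $g_n : B(z_n, \delta) \to W_n \subseteq V$ of $f^n$ with $g_n(z_n) = z_0$ and $|g_n'(z_n)| = 1/|(f^n)'(z_0)|$. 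The Koebe quarter theorem then gives $W_n \supseteq B\bigl(z_0, \delta/(4|(f^n)'(z_0)|)\bigr)$, so along $n_k$ the domain $W_{n_k}$ contains disks about $z_0$ of arbitrarily large radius.

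Next, I would rescale and extract a limit. Let $\phi_k(w) = g_{n_k}\bigl(z_{n_k} + \tfrac{\delta}{2} w\bigr)$ for $w \in B(0,1)$; each $\phi_k$ is univalent with $\phi_k(0) = z_0$ and $|\phi_k'(0)| = \delta/(2|(f^{n_k})'(z_0)|) \to \infty$. The normalization $\psi_k := (\phi_k - z_0)/\phi_k'(0)$ is a normal family of univalent maps on $B(0,1)$, so along a further subsequence $\psi_k \to \psi$ univalent with $\psi(0) = 0$ and $\psi'(0) = 1$. I would then use the boundedness hypothesis to upgrade this to normality of $\{f^n\}$ near $z_0$: by construction $f^{n_k} \circ \phi_k$ is the affine map $w \mapsto z_{n_k} + \tfrac{\delta}{2} w$, which is uniformly bounded when $\{z_i\}$ is; combined with the univalence of $\phi_k$ on $B(0,1)$ and Koebe distortion, this should yield uniform bounds on the iterates $f^n$ on a fixed neighborhood of $z_0$. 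The case where only $\mathcal{S}$ is bounded is handled analogously on the Riemann sphere, using that $\infty$ has a spherical neighborhood disjoint from $\mathcal{S}$.

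Finally, I would identify the Fatou-like component containing $z_0$. Normality of $\{f^n\}$ near $z_0$ places $z_0$ in a domain on which every subsequential limit is holomorphic; the $\delta$-separation of the orbit from $\mathcal{S}$, combined with the exponential decay of $|(f^{n_k})'(z_0)|$, should rule out rotation-type and parabolic behavior, while the hypothesis rules out an attracting cycle, yielding the desired contradiction. I expect the main obstacle to be executing this last step rigorously in the \emph{local} holomorphic-map setting of the theorem, where the global Fatou classification is not available; one must instead extract the contradiction directly from the univalent limit $\psi$ together with a sufficiently sharp quantitative form of the telescope, generalizing \cite[Lemma~2.2]{levin2014lyapunov} so that only the uniform bound $\mathrm{dist}(z_i, \mathcal{S}) \ge \delta$ (and not the one-singular-value hypothesis) enters the estimates.
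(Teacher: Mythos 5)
Your opening step contains a genuine error that undercuts the rest of the argument. You claim that, since $B(z_n,\delta)\subseteq V'\setminus\mathcal{S}$ and $f:V\setminus f^{-1}(\mathcal{S})\to V'\setminus\mathcal{S}$ is a covering map, an inductive lift produces a \emph{univalent} inverse branch $g_n$ of $f^n$ on the full ball $B(z_n,\delta)$. This is false. Lifting $B(z_n,\delta)$ once by $f$ gives a domain $W_{n-1}$ containing $z_{n-1}$ which lies in $V\setminus f^{-1}(\mathcal{S})$, but this set is not contained in $V'\setminus\mathcal{S}$: the set $\mathcal{S}$ is not assumed forward-invariant, so $W_{n-1}$ may very well intersect $\mathcal{S}$, in which case the next lift is no longer a covering-space lift of a simply connected set and can branch (or fail to exist). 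This is precisely the difficulty the paper's telescopic construction is designed to handle: Definition 2.1 introduces the \emph{maximal} radius $\tau_i\le\delta_n$ for which $f^{n-i}$ has a univalent pullback onto a neighborhood $U_i$ of $z_i$, and $\tau_i$ can drop well below $\delta$ whenever some pulled-back domain $f(U_i)$ bumps against a point of $\mathcal{S}$. The entire content of the theorem is to control how much $\tau_0$ can shrink relative to $\tau_n=\delta_n$, and your setup silently assumes $\tau_0=\delta$, which begs the question.

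Beyond that, the normal-families argument you sketch (rescaling $g_{n_k}$, extracting a univalent limit $\psi$, upgrading to normality of $\{f^n\}$, and then ruling out all Fatou-type behaviors) is not carried out, and you yourself flag the last step as the main obstacle. Your proposed remedy --- ``a sufficiently sharp quantitative form of the telescope, generalizing \cite[Lemma~2.2]{levin2014lyapunov} so that only the uniform bound $\operatorname{dist}(z_i,\mathcal{S})\ge\delta$ enters'' --- is in fact exactly the paper's route: it proves the quantitative bound $|(f^n)'(z_0)|\ge\rho_n^{-1}\exp[-C\gamma^{-2}\rho_n^{2+\gamma}n^{(4+\gamma)/5}]$ directly (Theorem 1.2), with no compactness or contradiction, by bounding the tail-distribution function $F_{z_0,n}(m)$ of modulus jumps via Teichm\"uller's theorem, Koebe distortion, and a packing argument, and Theorem 1.1 then follows by taking $\liminf$. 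Finally, the bounded-$\mathcal{S}$ case is not just ``the same argument on the sphere'': the paper needs a separate and fairly delicate refinement (Theorem 3.2 and Claim 3.3) that replaces the orbit bound $D_n$ by an $m$-dependent bound $D(m)$ controlling $|z_{i+1}|$ whenever a modulus jump of size $m$ occurs; your remark does not supply this.
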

\begin{remark}
\label{rem:DeltaGrowth}In \cite[Lemma 2.6]{levin2014lyapunov} it
is also shown that the strict requirement for a fixed $\delta>0$
for the entire orbit can be weakened to a less rigid condition: for
a map $f$ with one singular value ($c$), and $z_{0}\in V$ with
a well-defined orbit that does not belong to the basin of an attracting
cycle, if for any $\alpha>0$ and any large enough $n$, $B(z_{n},e^{-\alpha n})\subseteq V'\backslash\{c\}$
then $\underline{\chi}_{f}(z_{0})\ge0$. We derive yet a weaker condition:
for a map $f$ with a bounded singular set ($\mathcal{S}$), if there
are $\kappa>0,\ \beta<\frac{1}{2}$ such that $B(z_{n},\kappa n^{-\beta})\subseteq V'\backslash\mathcal{S}$
for every $n$ then $\underline{\chi}_{f}(z_{0})\ge0$. In the general
case of an arbitrary singular set, the growth of $|z_{n}|$ should
also be taken into account: it is needed that $\min_{i,j\le n}\frac{\min_{s\in\mathcal{S}\cup\mathbb{C}\backslash V'}|z_{i}-s|}{|z_{j}|}\ge\kappa n^{-\beta}$
for every $n$.
\end{remark}
Proofs for Theorem \ref{thm:MainResult} and Remark \ref{rem:DeltaGrowth}
are provided in Section \ref{sec:BoundedTypeAndProof}. Before that,
in the next section, we develop the following, more precise, estimate,
which implies Theorem \ref{thm:MainResult} for bounded orbits:
\begin{theorem}
\label{thm:DerivativeEstimation}Let\textbf{ $V$, $V^{\prime}$,
$\mathcal{S}$} and $f$ be as in Theorem \ref{thm:MainResult}. Let
$z_{0}\in V$ and $n\in\mathbb{N}$ such that the orbit $z_{i}= f^{i}(z_{0})$
is well-defined for any $0\le i\le n$ and $z_{0}$ does not belong
to the basin of an attracting cycle. Define
\footnote{Here and below $d(\circ,\circ)$ means the (minimal) Euclidean distance in the plane.}
$\delta_{n}=\min\left\{ \frac{1}{2},\ \min_{0\le i\le n}d(z_{i},\mathcal{S}),\ \min_{0\le i\le n}d(z_{i},\mathbb{C}\backslash V')\right\} $,
$D_{n}=\max_{0\le i\le n}|z_{i}|+1$. Assume $\delta_{n}>0$,
then for every $1>\gamma>0$:
\[
\left|(f^{n})^{\prime}(z_{0})\right|\ge\rho_{n}^{-1}\exp\left[-C\gamma^{-2}\rho_{n}^{2+\gamma}n^{\frac{4+\gamma}{5}}\right]
\]
 where $C$ is an absolute constant, $\rho_{n}=4\frac{D_{n}+M_{f}}{\delta_{n}}$
and $M_{f}$ is a constant that depends only on $f$: $M_{f}=\inf_{p\in\mathbb{C}\setminus V}|p|+1$
if $V\ne\mathbb{C}$ and $M_{f}=\inf_{P\in\Pi}\max_{p\in P}|p|+1$ if $V=\mathbb{C}$
where $\Pi$ is the set of cycles of $f$.
\end{theorem}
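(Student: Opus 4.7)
My plan is to bound $|(f^n)'(z_0)|$ from below by constructing a univalent inverse branch of $f^n$ near $z_n$ and controlling its derivative via Koebe's distortion and one-quarter theorems. Because $f\colon V\setminus f^{-1}(\mathcal{S})\to V'\setminus\mathcal{S}$ is an unbranched covering and each ball $B(z_i,\delta_n)$ is a simply connected subset of $V'\setminus\mathcal{S}$, I can iteratively lift $B(z_n,r)$ through the covering to obtain a univalent branch $h\colon B(z_n,r)\to V$ of $f^{-n}$ with $h(z_n)=z_0$, provided each intermediate pullback remains inside the corresponding $\delta_n$-neighborhood so that the next lifting step is well-defined. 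Since $1/(f^n)'(z_0)=h'(z_n)$, the theorem reduces to an upper bound on $|h'(z_n)|$ for the largest admissible $r$.

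The baseline estimate is Koebe's one-quarter theorem applied to $h$. If $h(B(z_n,r))$ omits a point $p\in\mathbb{C}\setminus V$ (when $V\neq\mathbb{C}$) or a point on a periodic cycle of $f$ disjoint from $B(z_n,r)$ (when $V=\mathbb{C}$, where the non-attracting hypothesis on $z_0$ is what prevents accumulation of the orbit on a suitably chosen cycle), then the distance from $z_0$ to $p$ is at most $D_n+M_f$, and Koebe gives $|h'(z_n)|\le 4(D_n+M_f)/r=\rho_n\delta_n/r$. Consequently it suffices to exhibit an $r$ satisfying $\delta_n/r\le\exp[C\gamma^{-2}\rho_n^{2+\gamma}n^{(4+\gamma)/5}]$ for which the iterated lift is defined.

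To produce such an $r$ I would telescope, refining \cite[Lemma 2.2]{levin2014lyapunov}. Fix a block length $m$, partition $[0,n]$ into $K\simeq n/m$ blocks $[N_{k-1},N_k]$, and on each block apply Koebe distortion to the univalent branch of $f^{-m}$ near $z_{N_k}$, tracking how the admissible radius $r_k$ must shrink so that the pullback stays inside $B(z_{N_{k-1}},\delta_n)$. Careful bookkeeping using Koebe distortion on half-disks, together with a parameter-dependent inductive comparison, should yield a per-step multiplicative shrinkage of the form $\exp(C\gamma^{-2}\rho_n^{2+\gamma}m^{-\alpha})$ for a suitable exponent $\alpha$; accumulating over the $\simeq n$ steps and optimizing $m\asymp n^{1/5}$ (so $K\asymp n^{4/5}$) balances the within-block distortion loss against the block-count factor and produces the announced exponent $\tfrac{4+\gamma}{5}$. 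The main technical obstacle is precisely this block estimate: one must sharpen the basic Koebe-chain argument so as to obtain the exponent $\rho_n^{2+\gamma}$ with the clean $\gamma^{-2}$ prefactor, via an inductive control of iterated distortion whose summability exponent is what pins down the block length. The $V=\mathbb{C}$ case is further delicate because of the need to locate an auxiliary non-attracting cycle near the orbit and exploit the non-attracting hypothesis to rule out accumulation of the orbit on it.
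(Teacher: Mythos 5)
Your setup is on the right track: pulling back a ball $B(z_n,r)$ via inverse branches, using Claim~\ref{claim:M}-type omitted-point reasoning plus Koebe's quarter theorem to reduce the problem to a lower bound on the maximal admissible pullback radius ($\tau_0$ in the paper's notation). That reduction is exactly Eq.~\eqref{eq:BaseDerBound}. But the core of your proposal---partitioning $[0,n]$ into $K\simeq n/m$ time-blocks of fixed length $m$, bounding a ``per-block multiplicative shrinkage'', and optimizing $m\asymp n^{1/5}$---is not the paper's argument and, I believe, cannot yield the stated estimate.

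The reason is that a single pullback step can already shrink the admissible radius by a factor as large as $e^{m_{\max}}\asymp\rho_n$; there is no uniform per-step or per-block bound strong enough to telescope to $\exp[-O(n^{4/5+\cdot})]$. The actual proof does not decompose in time at all. It decomposes by \emph{level of shrinkage}: it introduces $m_i=\log(\tau_{i+1}/\tau_i)$, the tail-count $F_{z_0,n}(m)=\#\{i: m_i\ge m\}$, rewrites the total loss $\sum m_i$ as $\int_0^\infty F_{z_0,n}(m)\,dm$ (summation by parts), and then bounds $F_{z_0,n}(m)$. The estimate on $F_{z_0,n}(m)$ is the real content, and it uses three ingredients absent from your proposal: (i) the interpretation of $m_i$ as the conformal modulus of the annulus $U_{i+1}\setminus f(U_i)$, combined with the Teichm\"uller extremal modulus theorem, to get geometric separation; (ii) a Schwarz-lemma contradiction (Claim~\ref{claim:E}) showing that if two indices $i_j,i_k\in\mathcal I_m$ with $k-j\ge E(m)$ had $z_{i_j+1}$ and $z_{i_k+1}$ too close, $z_0$ would lie in the basin of an attracting cycle---this is where the ``non-attracting'' hypothesis actually does its work, not merely in choosing an auxiliary cycle when $V=\mathbb C$; and (iii) a packing argument (Claim~\ref{claim:F}) that the separated points all lie in $\overline{B(0,D_n)}$, which bounds $\#\mathcal I_m$ and is where $D_n$ enters the final constant. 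Your sketch never uses the boundedness of the orbit except in the one initial Koebe step, so $D_n$ cannot appear with the right power, and the arithmetic in your sketch does not balance (the exponent $\alpha$ you would need to produce $n^{(4+\gamma)/5}$ works out to $\alpha=-\gamma$, i.e.\ negative). In short: right start, wrong decomposition, and the mechanism that makes the estimate possible---modulus-level counting plus Teichm\"uller plus Schwarz plus packing---is missing.
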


\section{\label{sec:Proof-of-Theorem}Proof of Theorem \ref{thm:DerivativeEstimation}}

Let $f$, $z_{i}$, $\delta_{n}$, $D_{n}$ etc. be as in Theorem
\ref{thm:DerivativeEstimation}. Note that $\rho_{n}=4\frac{D_{n}+M_{f}}{\delta_{n}}$
with $D_{n}\ge1$, $M_{f}\ge1$ and $\delta_{n}\le\frac{1}{2}$ so
$\rho_{n}\ge16$.

We use the construction presented in \cite{levin2014lyapunov}:
\begin{definition}
\label{def:U}For every $0\le i\le n$ define $0<\tau_{i}\le\delta_{n}$
to be the maximal possible radius such that there exists a neighborhood
$U_{i}$ of $z_{i}$ where $f^{n-i}:U_{i}\to B(z_{n},\tau_{i})$ is
a conformal isomorphism.

One can easily prove that these neighborhoods are well defined, that
$(\tau_{i})_{i=0}^{n}$ is non-decreasing, and that for every $0\le i<n$
with $\tau_{i}<\tau_{i+1}$ there exists am $s_{i}\in\mathcal{S}$
such that $s_{i}\in\partial f(U_{i})$.
\end{definition}
The map $f^{n}:U_{0}\to B(z_{n},\tau_{0})$ is univalent and thus,
by Koebe Quarter Theorem:
\begin{corollary}
\label{cor:KoebeBound}
\begin{equation}
|(f^{n})'(z_{0})|\ge\frac{\tau_{0}}{4d(z_{0},\partial U_{0})}
\end{equation}
\end{corollary}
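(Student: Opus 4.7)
The plan is to apply the Koebe Quarter Theorem to the inverse branch of $f^n$, not to $f^n$ itself. By Definition \ref{def:U}, the map $f^{n}:U_{0}\to B(z_{n},\tau_{0})$ is a conformal isomorphism, so we may set $g:=(f^{n})^{-1}:B(z_{n},\tau_{0})\to U_{0}$. This $g$ is univalent on the round disc $B(z_{n},\tau_{0})$, it satisfies $g(z_{n})=z_{0}$, and since $f^{n}$ is conformal on $U_{0}$ we have $(f^{n})'(z_{0})\neq 0$, so $g'(z_{n})=1/(f^{n})'(z_{0})$.

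Next I would invoke the standard form of the Koebe Quarter Theorem for a univalent function on a disc: the image of $B(z_{n},\tau_{0})$ under $g$ must contain the disc of radius $|g'(z_{n})|\,\tau_{0}/4$ around $g(z_{n})$. Since $g\bigl(B(z_{n},\tau_{0})\bigr)=U_{0}$, this reads
\[
U_{0}\;\supseteq\;B\!\left(z_{0},\,\frac{\tau_{0}}{4\,|(f^{n})'(z_{0})|}\right).
\]
Containment of this disc inside $U_{0}$ forces $d(z_{0},\partial U_{0})\ge \tau_{0}/\bigl(4|(f^{n})'(z_{0})|\bigr)$, which rearranges to the stated inequality.

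There is essentially no analytic obstacle here; the only point that requires care, and the one place where a straightforward application could go astray, is the direction in which Koebe is used. Applying the Quarter Theorem directly to $f^{n}$ on a sub-disc of $U_{0}$ produces an upper bound $|(f^{n})'(z_{0})|\le 4\tau_{0}/d(z_{0},\partial U_{0})$, since one then controls the image of an interior disc and that image is trapped inside $B(z_{n},\tau_{0})$. Passing to the inverse branch reverses the roles of source and target: now the domain is the round disc of radius $\tau_{0}$ guaranteed by the maximal construction of $\tau_{0}$, and the Koebe lower bound on how much of $U_{0}$ must be covered translates exactly into a lower bound on $|(f^{n})'(z_{0})|$, as required.
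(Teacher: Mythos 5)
Your proof is correct and is essentially the paper's argument: the paper simply cites the Koebe Quarter Theorem for the univalent map $f^{n}:U_{0}\to B(z_{n},\tau_{0})$, and the precise way to do this is exactly as you did, applying Koebe to the inverse branch on the round disc $B(z_{n},\tau_{0})$ so that $U_{0}\supseteq B\!\left(z_{0},\tfrac{\tau_{0}}{4|(f^{n})'(z_{0})|}\right)$, which yields the stated lower bound. Your remark about the direction of application (Koebe applied to $f^{n}$ itself only gives an upper bound) correctly identifies the one subtlety.
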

Let us bound the denominator. As in \cite[Eq. 2.7]{levin2014lyapunov}
we use:
\begin{claim}
\label{claim:M}
\begin{equation}
U_{i}\not\supset\overline{B(0,M_{f})},\quad\forall0\le i\le n
\end{equation}
\end{claim}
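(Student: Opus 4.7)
My plan is to argue by contradiction, splitting into cases. The case $i = n$ is trivial: $U_n = B(z_n, \tau_n)$ has radius $\tau_n \le \delta_n \le \tfrac{1}{2}$, so its diameter is at most $1$, while $\overline{B(0, M_f)}$ has diameter at least $2$, making the containment impossible. For $0 \le i < n$, the fact that $f^{n-i}$ must be defined on $U_i$ forces $U_i \subseteq V$. If additionally $V \ne \mathbb{C}$, the definition $M_f = \inf_{p \in \mathbb{C} \setminus V} |p| + 1$ supplies some $p^\ast \in \mathbb{C} \setminus V$ with $|p^\ast| < M_f$, so $p^\ast \in \overline{B(0, M_f)}$; if $U_i \supseteq \overline{B(0, M_f)}$ we would get $p^\ast \in U_i \subseteq V$, a contradiction.

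The substantive case is $0 \le i < n$ with $V = \mathbb{C}$. Assume $U_i \supseteq \overline{B(0, M_f)}$. By the definition of $M_f$ there is a cycle $P$ of $f$ of some period $k$ with $\max_{p \in P}|p| < M_f$, so $P \subset U_i$. Because $f$ permutes the finite set $P$, so does $f^{n-i}$, and hence $P \subseteq B(z_n, \tau_i) \cap U_i$. My strategy is to show that $P$ is an attracting cycle and that $z_0$ lies in its basin, contradicting the hypothesis. The key estimate is a Schwarz--Pick / hyperbolic-metric comparison: $U_i$ is simply connected and a proper subset of $\mathbb{C}$ (otherwise $f^{n-i}$ would be a bounded univalent entire map, absurd), hence admits a Poincar\'e metric $\rho_{U_i}$. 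Because $f^{n-i} \colon U_i \to B(z_n, \tau_i)$ is a biholomorphism, it is a hyperbolic isometry. The inclusion $B(p, M_f - |p|) \subseteq U_i$ together with monotonicity gives $\rho_{U_i}(p) \le 2/(M_f - |p|)$, while $\rho_{B(z_n, \tau_i)} \ge 2/\tau_i$ everywhere, yielding $|(f^{n-i})'(p)| \le \tau_i/(M_f - |p|) < 1$ at every $p \in P$. Multiplying over the cycle and using the chain-rule identity $\prod_{p \in P}(f^{n-i})'(p) = \bigl((f^k)'(p_0)\bigr)^{n-i}$ (which follows because for each $l$ the map $p \mapsto f^l(p)$ permutes $P$) then forces $|(f^k)'(p_0)| < 1$, so $P$ is attracting. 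Finally, since $z_i \in U_i$ and $f^{n-i}$ is a strict hyperbolic contraction from $U_i$ into $B(z_n, \tau_i) \cap U_i$, iterating drives $z_i$ into the open basin of $P$, whence $z_0 \in \mathrm{Basin}(P)$.

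The main obstacle I foresee is the last basin-inclusion step. Iterating $f^{n-i}$ beyond one application requires $B(z_n, \tau_i) \subseteq U_i$ so the image remains in the domain of univalence, and this inclusion is not automatic from $U_i \supseteq \overline{B(0, M_f)}$ when $|z_n|$ is large. Handling it cleanly --- either by checking the inclusion in the region of interest using that $U_i$ contains the comparatively large set $\overline{B(0, M_f)}$, or by bypassing it via openness of the attracting basin combined with the hyperbolic contraction already established --- is the technical heart of the case $V = \mathbb{C}$; all other cases reduce to one-line arguments.
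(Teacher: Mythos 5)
Your cases $i = n$ and $i < n$, $V \neq \mathbb{C}$ are correct and agree with the paper. The gap is exactly where you flag it: in the entire case you never establish that $z_0$ lies in the basin of the attracting cycle, because you cannot iterate $f^{n-i}$ without first knowing $\overline{B(z_n, \tau_i)} \subseteq U_i$, and you leave that inclusion unresolved. But the inclusion is available from what you already have, and once you have it the hyperbolic-metric estimate and the cycle-multiplier computation become superfluous. The definition of $M_f$ supplies a cycle $P$ with $\max_{p \in P}|p| < M_f - 1 + \epsilon$ for arbitrary $\epsilon > 0$ (note this is strictly stronger than the $\max_{p \in P}|p| < M_f$ you quote, and is also what your derivative bound secretly needs: from $\max_{p\in P}|p| < M_f$ alone, $M_f - |p|$ could be smaller than $\tau_i$, so $\tau_i/(M_f - |p|) < 1$ is not guaranteed). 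Since you already observed $P \subseteq B(z_n, \tau_i)$ with $\tau_i \le \delta_n \le \frac{1}{2}$, the point $z_n$ lies within $\frac{1}{2}$ of some $p \in P$, hence $|z_n| < M_f - \frac{1}{2} + \epsilon$, and letting $\epsilon \to 0$ gives $\overline{B(z_n, \delta_n)} \subseteq \overline{B(0, M_f)} \subseteq U_i$. This is the paper's key observation; with it, $f^{n-i}$ maps $U_i$ univalently onto $B(z_n, \tau_i)$, which is compactly contained in $U_i$, so by the Schwarz lemma (after normalizing $U_i$ to the unit disk) every point of $U_i$ --- in particular $z_i$, hence $z_0$ --- is attracted under iteration of $f^{n-i}$ to a fixed point, i.e.\ to an attracting cycle of $f$, with no need to identify the attracting cycle as $P$ or to estimate its multiplier.
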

\begin{proof}
In any case $M_{f}\ge1>\frac{1}{2}\ge\delta_{n}$ so for $i=n$ the
disk $U_{n}=B(z_{n},\delta_{n})$ can not contain the larger disk
$\overline{B(0,M_{f})}$. Now assume $i<n$, for $V\ne\mathbb{C}$ - $M_{f}=\inf_{p\in\mathbb{C}\setminus V}|p|+1$,
so $\overline{B(0,M_{f})}$ contains a point $p\notin V\supseteq U_{i}$.
For entire map we have defined $M_{f}=\inf_{P\in\Pi}\max_{p\in P}|p|+1$
where $\Pi$ is the set of cycles of $f$. By theorem of Fatou there
are such cycles for every entire map which is not of the form $f(z)=z+c$
(where $|(f^{n})^{\prime}|=1$ so the theorem holds for maps of this
form). Let $\frac{1}{4}>\epsilon>0$ so there is a cycle $P\in\Pi$
with $\max_{p\in P}|p|+1<M_{f}+\epsilon$, if the claim does not hold
then $U_{i}\supset\overline{B(0,M_{f})}\supsetneq P$ so
\[
P=f^{n-i}(P)\subset f^{n-i}(U_{i})=B(z_{n},\tau_{i})\subseteq B(z_{n},\delta_{n})
\]

i.e. $|z_{n}-p|<\delta_{n}$ for every $p\in P$ and therefore $|z_{n}|<\delta_{n}+(M_{f}-1+\epsilon)$.
$\delta_{n}\le\frac{1}{2}$ so $\overline{B(z_{n},\delta_{n})}\subseteq\overline{B(0,M_{f}+\epsilon)}$.
It holds for evey small $\epsilon$ so also $\overline{B(z_{n},\delta_{n})}\subseteq\overline{B(0,M_{f})}$
which we assumed to be contained in $U_{i}.$

\[
f^{n-i}(U_{i})=B(z_{n},\tau_{i})\subsetneqq\overline{B(z_{n},\delta_{n})}\subseteq\overline{B(0,M_{f})}\subset U_{i}
\]

which implies by the Schwarz lemma that $z_{i}$ , hence $z_{0}$,
is contained in the basin of an attracting cycle of $f$, a contradiction.
\end{proof}
Claim \ref{claim:M} implies that for every $0\le i\le n$ there is
a point $p$ such that $|p|\le M_{f}$ and $p\not\in U_{i}$. Therefore
$d(z_{i},\partial U_{i})\le M_{f}+|z_{i}|$. In particular $d(z_{0},\partial U_{0})\le M_{f}+|z_{0}|$.

To get the desired bound on $|\left(f^{n}\right)^{\prime}|$ we need
to give a lower bound for $\tau_{0}$. Let $m_{i}=\log\frac{\tau_{i+1}}{\tau_{i}}$
for every $0\le i<n$,
\begin{equation}
\log\tau_{0}=\log\left[\tau_{n}\cdot\frac{\tau_{n-1}}{\tau_{n}}\cdots\frac{\tau_{0}}{\tau_{1}}\right]=\log\delta_{n}-\sum_{i=0}^{n-1}m_{i}
\end{equation}
To estimate this value define for every $m\ge0$: $\mathcal{I}_m=\left\{ 0\le i<n:\ m_{i}\ge m\right\} $
and consider the tail distribution function $\ensuremath{F_{z_{0},n}:[0,\infty)\to\{0,1,...,n\}}$:
\begin{equation}
F_{z_{0},n}(m)=\#\left\{ 0\le i<n:\ m_{i}\ge m\right\} =\#\mathcal{I}_m
\end{equation}

Define the sequence $\mbox{(\ensuremath{m^{i})_{0}^{k+1}}}$ to be
the (unique) elements of $\{0\}\cup\left\{ m_{i}\right\} _{i=0}^{n-1}\cup\left\{ \max\{m_{i}\}+1\right\} $
arranged in a (strictly) monotonically order, i.e $m^{0}=0$, $m^{1}=\min\{m_{i}\}$,
$\dots$, $m^{k}=\max\{m_{i}\}$, $m^{k+1}=\max\{m_{i}\}+1$. With
this definition, for every $i\le k$: $\#\{j:\ m_{j}=m^{i}\}=\ensuremath{F_{z_{0},n}(m^{i})-F_{z_{0},n}(m^{i+1})}$.
Thus, using summation by parts (Abel transformation), we get:

\[
\sum_{i=0}^{n-1}m_{i}=\sum_{i=0}^{k}m^{i}(F_{z_{0},n}(m^{i})-F_{z_{0},n}(m^{i+1}))=
\]
\[
=m^{0}F_{z_{0},n}(m^{0})-m^{k+1}F_{z_{0},n}(m^{k+1})+\sum_{i=1}^{k}F_{z_{0},n}(m^{i})(m^{i}-m^{i-1})=
\]
\[
=0\cdot F_{z_{0},n}(0)-m^{k+1}\cdot0+\int_{0}^{\infty}F_{z_{0},n}(m)dm
\]
So the bound for $|(f^{n})^{\prime}|$ turns into:
\begin{equation}
\begin{split}
|(f^{n})^{\prime}(z_{0})|\ge\frac{1}{4}\frac{\delta_{n}}{M_{f}+|z_{0}|}\exp\left[-\int_{0}^{\infty}F_{z_{0},n}(m)dm\right]\\
\ge\rho_{n}^{-1}\exp\left[-\int_{0}^{\infty}F_{z_{0},n}(m)dm\right]\label{eq:BaseDerBound}
\end{split}
\end{equation}

Note $F_{z_{0},n}$ gives an upper bound on the number of possible
values of $i$ such that $m\le m_{i}=\log\frac{\tau_{i+1}}{\tau_{i}}$.
The right hand side of this inequality is the modulus\footnote{It is more standard to define modulus as $\frac{1}{2\pi}\log\frac{\tau_{i+1}}{\tau_{i}}$.}
of the annulus $\left\{ z:\ \tau_{i+1}<|z-z_{n}|<\tau_{i}\right\} $.
Directly from the definition of $U_{i}$:
\[
\begin{array}{ll}
f^{n-(i+1)}(U_{i+1}) & =B(z_{n},\tau_{i+1})\\
f^{n-(i+1)}(f(U_{i}))=f^{n-i}(U_{i}) & =B(z_{n},\tau_{i})
\end{array}
\]
and the maps $f^{n-i-1}$ are conformal so preserve modulus: $\bmod(U_{i+1}\backslash f(U_{i}))=\log\frac{\tau_{i+1}}{\tau_{i}}=m_{i}$.
We will use this fact extensively in the following known theorem (the proof is included for the reader’s convenience):
\begin{theorem}[cf. \cite{ahlfors2010conformal}]
\label{thm:Teichmuller}If $\mathcal{A}\subset\mathbb{C}$ is a doubly
connected region with finite modulus $m$ that separates the pair
$\left\{ e_{1},\ e_{2}\right\} $ from the pair $\left\{ e_{3},\ \infty\right\} $
then
\[
\left|e_{3}-e_{1}\right|\ge|e_{2}-e_{1}|\cdot\max\left\{ \frac{1}{16}e^{m}-1,\ 16e^{-\frac{\pi^{2}}{m}}\right\}
\]
\end{theorem}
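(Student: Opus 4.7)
The plan is to reduce the inequality to a one-parameter extremal-length problem, then invoke two classical upper bounds on the Teichmüller modulus function. Because modulus is invariant under affine transformations, I first apply the map $\phi(z)=(z-e_{1})/(e_{2}-e_{1})$ to reduce to the normalized situation $e_1=0$, $e_2=1$, $e_3=w$ with $|w|=|e_{3}-e_{1}|/|e_{2}-e_{1}|$. The image $\phi(\mathcal{A})$ still has modulus $m$ and separates $\{0,1\}$ from $\{w,\infty\}$, so it suffices to show
\[
|w|\ \ge\ \max\!\left\{\tfrac{1}{16}e^{m}-1,\ 16\,e^{-\pi^{2}/m}\right\}.
\]

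Next I invoke Teichmüller's modulus theorem: among all doubly connected domains separating a pair of finite points from a pair that includes $\infty$, the maximum modulus is realized by a Teichmüller extremal domain and depends only on an absolute cross-ratio. In the present normalization, after a harmless rotation sending $w$ to the negative real axis, the extremal domain is $T_{s}=\mathbb{C}\setminus([-s,0]\cup[1,\infty))$ with $s=|w|$, and
\[
m\ \le\ \Lambda(|w|),\qquad \Lambda(s):=\operatorname{mod}(T_{s}).
\]

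With this in hand, the theorem follows from two classical upper bounds on $\Lambda$. The first, $\Lambda(s)\le \log(16(s+1))$, comes from a Grötzsch-type comparison: $T_{s}$ is contained in a disk of radius $s+1$ around the origin whose complementary bounded arcs can be dominated by a concentric round annulus, and the Koebe $\tfrac{1}{4}$-constant enters through the comparison. The second, $\Lambda(s)\le \pi^{2}/\log(16/s)$, is a reflection/hyperbolic estimate: after uniformizing $T_{s}$ and relating it to the Grötzsch extremal annulus, one uses the asymptotic $\mu(r)\sim \log(1/r)$ as $r\to 0^{+}$ of the Grötzsch modulus together with the complementary modulus identity to extract the $\pi^{2}$. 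Solving the first inequality for $s=|w|$ gives $|w|\ge \tfrac{1}{16}e^{m}-1$, and solving the second gives $|w|\ge 16\,e^{-\pi^{2}/m}$, which together yield the claim.

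The main obstacle is step three: the specific constants $16$ and $\pi^{2}$ are not free parameters but arise from explicit elliptic-integral/modular-function computations for $\Lambda$ and $\mu$, and matching them requires careful bookkeeping rather than soft arguments. Fortunately both bounds are standard and can be cited directly from Ahlfors's \emph{Conformal Invariants}, which is the reference indicated in the theorem statement; the novelty here is only in combining the two regimes (large $m$, where the exponential bound is informative, and small $m$, where only the double-exponential bound survives) into a single max over both estimates.
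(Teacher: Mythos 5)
Your proof follows essentially the same route as the paper: normalize with an affine map so that $e_1\mapsto 0$ and $e_2\mapsto$ a unit point, invoke Teichm\"uller's extremal modulus theorem to obtain $m\le\Lambda(|w|)$, and then read off the two inequalities from the standard estimates on the Teichm\"uller modulus function (which the paper writes as $R-1\le \tfrac{1}{16}e^{\Lambda(R)}-1\le R$ together with the duality $\Lambda(R)\Lambda(R^{-1})=\pi^2$, from Ahlfors). One small slip: your extremal domain $T_s=\mathbb{C}\setminus([-s,0]\cup[1,\infty))$ actually separates $\{-s,0\}$ from $\{1,\infty\}$ and its modulus is \emph{decreasing} in $s$; for the configuration you set up (the pair $\{0,1\}$ against $\{-s,\infty\}$) the extremal is $\mathbb{C}\setminus([0,1]\cup(-\infty,-s])$, which rescales to the paper's $\mathbb{C}\setminus([-1,0]\cup[s,\infty))$ with the increasing function $\Lambda(s)$ — this is what your two subsequent upper bounds on $\Lambda$ actually refer to, so the conclusion is unaffected.
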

\begin{proof}
By Teichmuller Extremal Modulus Theorem (e.g. \cite[Theorem 4-7]{ahlfors2010conformal}),
of all doubly connected regions that separate the pair $\left\{ 0,-1\right\} $
from a pair $\left\{ w_{0},\ \infty\right\} $ with $|w_{0}|=R$,
the one with the greatest modulus is the complement of the segments
$[-1,0]$ and $\left[R,\ +\infty\right]$. Denote the modulus of this
region by $\Lambda(R)$. This function is known to be bounded by:
\[
R-1\le\frac{e^{\Lambda(R)}}{16}-1\le R
\]
and $\Lambda(R)\Lambda(R^{-1})=\pi^{2}$ so also:
\[
R\ge16e^{-\frac{\pi^{2}}{\Lambda(R)}}
\]

With the map $z\to\frac{z-e_{1}}{e_{1}-e_{2}}$ the region $\mathcal{A}$
is mapped to a region that separates $\left\{ 0,-1\right\} $ from
$\left\{ \frac{e_{3}-e_{1}}{e_{1}-e_{2}},\ \infty\right\} $, and
therefore $m\le\Lambda\left(\left|\frac{e_{3}-e_{1}}{e_{1}-e_{2}}\right|\right)$,
i.e.
\[
\left|\frac{e_{3}-e_{1}}{e_{1}-e_{2}}\right|\ge\max\left\{ \frac{1}{16}e^{m}-1,\ 16e^{-\frac{\pi^{2}}{m}}\right\}
\]
\end{proof}
First application of this theorem is to change the interval of integration
in Eq. \ref{eq:BaseDerBound} to be finite.
\begin{corollary}
Set $m_{\max}=2+\log\rho_{n}$, then:
\begin{equation}
\ensuremath{\forall m\ge m_{\max}:\quad F_{z_{0},n}(m)\equiv0}
\end{equation}
\end{corollary}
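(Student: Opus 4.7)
The corollary is equivalent to the pointwise bound $m_i<m_{\max}=2+\log\rho_n$ for every $0\le i<n$. If $\tau_i=\tau_{i+1}$ then $m_i=0$ and we are done, so I assume $\tau_i<\tau_{i+1}$; by Definition \ref{def:U} there is then some $s_i\in\mathcal{S}$ lying on $\partial f(U_i)$. The plan is to apply Theorem \ref{thm:Teichmuller} to the annulus $\mathcal{A}=U_{i+1}\setminus\overline{f(U_i)}$, whose modulus is $m_i$ (as noted in the excerpt). Since $U_{i+1}$ and $f(U_i)$ are simply connected (both being conformal images of disks) and $f(U_i)\subset U_{i+1}$, the bounded complementary component of $\mathcal{A}$ is $\overline{f(U_i)}$ and the unbounded one is $\mathbb{C}\setminus U_{i+1}$.

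I would take $e_1=z_{i+1}$ and $e_2=s_i$ in the first component, and $e_3=p$ in the second, where $p$ is any point supplied by Claim \ref{claim:M} applied to index $i+1$, so $|p|\le M_f$ and $p\notin U_{i+1}$. The first branch of Theorem \ref{thm:Teichmuller} then gives
\[
|p-z_{i+1}|\ge|s_i-z_{i+1}|\left(\tfrac{1}{16}e^{m_i}-1\right).
\]
The two input estimates are immediate: $|p-z_{i+1}|\le|p|+|z_{i+1}|\le M_f+D_n$, while $|s_i-z_{i+1}|\ge\delta_n$ by the very definition of $\delta_n$, which forces $B(z_{i+1},\delta_n)\cap\mathcal{S}=\emptyset$. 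Combining these with $\rho_n=4(D_n+M_f)/\delta_n$ yields $e^{m_i}\le 16+4\rho_n$; and since $\rho_n\ge 16$ the right-hand side is at most $5\rho_n<e^{2}\rho_n$, hence $m_i<2+\log\rho_n=m_{\max}$, as required.

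The only real wrinkle will be the mildly degenerate configuration in Theorem \ref{thm:Teichmuller}: the point $s_i$ lies on $\partial f(U_i)$ rather than strictly inside, so $\mathcal{A}$ separates $\{z_{i+1},s_i\}$ from $\{p,\infty\}$ only in the closed sense. I would handle this by choosing a sequence $s_i^{(k)}\in f(U_i)$ converging to $s_i$, applying Theorem \ref{thm:Teichmuller} to $(z_{i+1},s_i^{(k)},p)$ for each $k$, and passing to the limit using continuity of $|s_i^{(k)}-z_{i+1}|$; alternatively one may invoke continuity of the Teichmüller extremal function $\Lambda$ directly.
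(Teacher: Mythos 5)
Your proof is correct and uses essentially the same argument as the paper: applying Theorem \ref{thm:Teichmuller} to the annulus $U_{i+1}\setminus\overline{f(U_i)}$ with $e_1=z_{i+1}$, $e_2=s_i$, $e_3=p$ from Claim \ref{claim:M}, then combining $|p-z_{i+1}|\le M_f+D_n$ and $|s_i-z_{i+1}|\ge\delta_n$ to bound $e^{m_i}$ by a constant multiple of $\rho_n$. The only difference is presentational (you argue directly that $m_i<m_{\max}$, the paper argues by contradiction), and your worry about $s_i$ lying on $\partial f(U_i)$ is unnecessary since $s_i\in\overline{f(U_i)}$, which is precisely the bounded complementary component of the annulus, so the separation hypothesis holds as stated.
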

\begin{proof}
Assume $F_{z_{0},n}(m)\neq0$ - this means that there exists an $i<n$
with $m_{i}\ge m_{\max}$. As we have seen, by the definition of modulus:
\[
\bmod(U_{i+1}\backslash f(U_{i}))=\log\left(\frac{\tau_{i+1}}{\tau_{i}}\right)=m_{i}\ge m_{\max}
\]
$z_{i+1}\in f(U_{i})$, $m_{i}>0$ so there is a singular $s_{i}\in\partial f(U_{i})$
(so $|s_{i}-z_{i+1}|\ge\delta_{n}$ by definition) and by Claim \ref{claim:M}
there is a point $p\notin U_{i+1}$ with $|p|\le M_{f}$. So by Theorem
\ref{thm:Teichmuller}:
\[
\left|p-z_{i+1}\right|\ge|s_{i}-z_{i+1}|\cdot\left(\frac{1}{16}e^{m_{i}}-1\right)\ge\delta_{n}\cdot\left(\frac{e^{2}}{16}\rho_{n}-1\right)
\]

but $\left|p-z_{i+1}\right|\le|p|+|z_{i+1}|\le M_{f}+D_{n}$, $\rho_{n}=4\frac{M_{f}+D_{n}}{\delta_{n}}$
and $\rho_{n}\ge16$ so we get the contradiction $M_{f}+D_{n}\ge\delta_{n}\cdot\rho_{n}\cdot\left(\frac{e^{2}-1}{16}\right)=(M_{f}+D_{n})\left(\frac{e^{2}-1}{4}\right)$.
\end{proof}
Thus we can bound our integral by $\int_{0}^{\infty}F_{z_{0},n}(m)dm=\int_{0}^{m_{\max}}F_{z_{0},n}(m)dm$.
Now let us start to construct a bound for $F_{z_{0},n}$ by showing
that we can get an explicit lower bound for distance between elements
of $\mathcal{I}_m$.
\begin{claim}
\label{lem:Alpha}For every $m>0$ and $i\in\mathcal{I}_m$:
\begin{equation}
B\left(z_{i+1},\frac{\delta_{n}}{\alpha(m)}\right)\subset f(U_{i})
\end{equation}
with $\alpha(m)=\left(\frac{2}{m}+1\right)^{2}$.
\end{claim}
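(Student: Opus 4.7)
The plan is to pull back through the inverse branch of $f^{n-i-1}|_{U_{i+1}}$ to a round annular configuration about $z_n$, and then invoke the Koebe distortion theorem to convert the lower modulus bound $m_i\ge m$ into a quantitative roundness statement for $f(U_i)$ about $z_{i+1}$.

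First I would set up the conformal framework. Since $f^{n-i}=f^{n-i-1}\circ f$ is univalent on $U_i$, both $f|_{U_i}$ and $f^{n-i-1}|_{f(U_i)}$ are univalent, and in particular $f^{n-i-1}:U_{i+1}\to B(z_n,\tau_{i+1})$ is a conformal isomorphism. Let $\phi$ denote its inverse, so $\phi(z_n)=z_{i+1}$. Comparing germs at $z_n$ with the univalent branch $(f^{n-i-1}|_{f(U_i)})^{-1}$ on the connected disk $B(z_n,\tau_i)\subseteq B(z_n,\tau_{i+1})$ yields $\phi(B(z_n,\tau_i))=f(U_i)$, so $\overline{f(U_i)}$ is compactly contained in $U_{i+1}$. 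Because $m_i>0$ forces $\tau_i<\tau_{i+1}$, Definition \ref{def:U} supplies a point $s_i\in\mathcal{S}\cap\partial f(U_i)$; extending $\phi$ continuously to $\overline{B(z_n,\tau_i)}$ produces $w_i\in\partial B(z_n,\tau_i)$ with $\phi(w_i)=s_i$, and $|s_i-z_{i+1}|\ge\delta_n$ because $B(z_{i+1},\delta_n)\subseteq V'\setminus\mathcal{S}$.

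Next I would apply the two-sided Koebe distortion estimate to $\phi$ on $B(z_n,\tau_{i+1})$: writing $r=\tau_i/\tau_{i+1}=e^{-m_i}$, for every $w$ with $|w-z_n|=\tau_i$,
\[
\frac{r}{(1+r)^2}\,\tau_{i+1}|\phi'(z_n)|\;\le\;|\phi(w)-z_{i+1}|\;\le\;\frac{r}{(1-r)^2}\,\tau_{i+1}|\phi'(z_n)|.
\]
Plugging $w=w_i$ into the upper estimate gives $\tau_{i+1}|\phi'(z_n)|\ge\delta_n(1-r)^2/r$; substituting this back into the lower estimate and minimizing over the circle produces
\[
d(z_{i+1},\partial f(U_i))\;\ge\;\delta_n\Bigl(\frac{1-r}{1+r}\Bigr)^{\!2}=\delta_n\tanh^2(m_i/2).
\]

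To close the argument, since $\alpha(m)=(1+2/m)^2$ is decreasing in $m$ and $m_i\ge m$, it suffices to verify the elementary inequality $\tanh(m/2)\ge m/(m+2)$, which rearranges to the standard bound $e^m\ge 1+m$. This gives $\tanh^2(m_i/2)\ge 1/\alpha(m)$ and hence $B(z_{i+1},\delta_n/\alpha(m))\subseteq f(U_i)$. The only real subtlety here is the branch identification $\phi(B(z_n,\tau_i))=f(U_i)$ and confirming that the singular boundary point pulls back to the correct circle; once that is in place the claim reduces to a single invocation of Koebe's two-sided distortion estimate, and Theorem \ref{thm:Teichmuller} does not appear to be needed for this particular step.
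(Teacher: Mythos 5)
Your proposal is correct and follows essentially the same route as the paper: pull back through the inverse branch of $f^{n-i-1}$ on $U_{i+1}$, identify $\partial f(U_i)$ with the image of the circle $|w-z_n|=\tau_i$, use the singular point $s_i$ at distance $\ge\delta_n$ together with the upper Koebe bound to control $|\phi'(z_n)|$ (the paper's $|g'(0)|$), and then feed that into the lower Koebe bound to get $d(z_{i+1},\partial f(U_i))\ge\delta_n\bigl(\tfrac{1-e^{-m_i}}{1+e^{-m_i}}\bigr)^{2}$, finishing with the same elementary estimate $e^m\ge 1+m$; the only cosmetic difference is that the paper first normalizes the branch to a map $g:\mathbb{D}\to U_{i+1}$, while you apply the scaled distortion estimate directly, and you are also right that Theorem \ref{thm:Teichmuller} plays no role in this particular claim.
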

\begin{proof}
Let $i\in\mathcal{I}_m$ so $m_{i}\ge m>0$. By Definition \ref{def:U} an inverse
branch $\ensuremath{g_{0}:B(z_{n},}\tau_{i+1})\to U_{i+1}$ of $f^{n-i-1}$
is a well defined conformal isomorphism. Let $\ensuremath{g(w)=g_{0}(\tau_{i+1}w+z_{n})}$
so that $g:\mathbb{D}\to U_{i+1}$. In particular:
\[
\ensuremath{g\left(B\left(0,\frac{\tau_{i}}{\tau_{i+1}}\right)\right)=g_{0}(B(z_{n},\tau_{i}))=f(U_{i})}.
\]
By the Koebe Distortion Theorem, for any $w\in\mathbb{D}$ ($g'(0)=\frac{\tau_{i+1}}{(f^{n-i-1})'(z_{i+1})}\ne0$),
\begin{equation}
\frac{|w|}{(1-|w|)^{2}}\ge\left|\frac{g(w)-g(0)}{g'(0)}\right|\ge\frac{|w|}{(1+|w|)^{2}}\label{eq:distortion}
\end{equation}
The circle $\ensuremath{|w|=\frac{\tau_{i}}{\tau_{i+1}}=e^{-m_{i}}}$
is mapped by $g$ onto:
\[
g_{0}(\partial B(z_{n},\tau_{i}))=\partial f(U_{i})
\]
and since $m_{i}\ge m>0$ there is $s_{i}\in\mathcal{S}$ such that
$s_{i}\in\partial f(U_{i})$. But by the definition of $\delta_{n}$,
$|z_{i+1}-s_{i}|\ge\delta_{n}$ so the left hand side of (\ref{eq:distortion})
yields:
\[
\frac{e^{-m_{i}}}{(1-e^{-m_{i}})^{2}}\ge\max_{|w|=e^{-m_{i}}}\left|\frac{g(w)-g(0)}{g'(0)}\right|=\left|g'(0)\right|^{-1}\max_{v\in\partial f(U_{i})}\left|v-z_{i+1}\right|\ge\frac{\delta_{n}}{\left|g'(0)\right|}
\]
This inequality and the right hand side of (\ref{eq:distortion})
yields (after multiplication by $\left|g'(0)\right|$):
\begin{equation*}
\begin{split}
\min_{v\in\partial f(U_{i})}\left|v-z_{i+1}\right|
\ge\left|g'(0)\right|\frac{e^{-m_{i}}}{(1+e^{-m_{i}})^{2}}
&\ge\delta_{n}\frac{(1-e^{-m_{i}})^{2}}{e^{-m_{i}}}\frac{e^{-m_{i}}}{(1+e^{-m_{i}})^{2}}\\
&=\delta_{n}\left(\frac{1-e^{-m_{i}}}{1+e^{-m_{i}}}\right)^{2}
\end{split}
\end{equation*}

For $x\ge0$, $e^{x}\ge1+x$ hence $\frac{1-e^{-x}}{1+e^{-x}}\ge\frac{1-\frac{1}{1+x}}{1+\frac{1}{1+x}}=\left(\frac{2}{x}+1\right)^{-1}$
so we get:

\[
d(z_{i+1},\partial f(U_{i}))\ge\delta_{n}\left(\frac{2}{m_{i}}+1\right)^{-2}=\frac{\delta_{n}}{\alpha(m_{i})}\ge\frac{\delta_{n}}{\alpha(m)}
\]
\end{proof}
Denote $\ensuremath{\mathcal{I}_m=\{i_{1}<i_{2}<...<i_{F_{z_{0},n}(m)}\}}$.
\begin{claim}
\label{claim:E}For $0<j<k\le F_{z_{0},n}(m)$ with $k-j\ge E(m)$:
\begin{equation}
|z_{i_{j}+1}-z_{i_{k}+1}|\ge\frac{\delta_{n}}{2\alpha(m)}
\end{equation}
where $\ensuremath{E(m)=\left\lfloor m^{-1}\log\left[9\rho_{n}\alpha(m)\right]\right\rfloor }$.
\end{claim}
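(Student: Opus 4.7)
I will argue by contradiction. Assume $|z_{i_j+1} - z_{i_k+1}| < \delta_n/(2\alpha(m))$ while $k - j \geq E(m)$; the goal is to produce an attracting cycle of $f$ whose basin contains $z_0$, contradicting the standing hypothesis.

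The key structural point is that the iterate $\Phi := f^{i_k - i_j}$ restricts to a conformal isomorphism
\[
\Phi : U_{i_j+1} \longrightarrow \Phi(U_{i_j+1}) \subset U_{i_k+1}, \qquad \Phi(z_{i_j+1}) = z_{i_k+1}.
\]
Indeed, writing $h_k := (f^{n-i_k-1})^{-1}$ and $h_j := (f^{n-i_j-1})^{-1}$ for the conformal inverse branches from Definition \ref{def:U}, we have $\Phi = h_k \circ f^{n-i_j-1}|_{U_{i_j+1}}$ by uniqueness of analytic continuation, and its image equals $h_k(B(z_n, \tau_{i_j+1})) \subset U_{i_k+1}$ (using $\tau_{i_j+1} \leq \tau_{i_k+1}$). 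Moreover, Claim \ref{lem:Alpha} applied at $i_j$ and $i_k$ gives $B(z_{i_j+1}, \delta_n/\alpha(m)) \subset U_{i_j+1}$ and $B(z_{i_k+1}, \delta_n/\alpha(m)) \subset U_{i_k+1}$, so the contradiction hypothesis places $z_{i_k+1} \in U_{i_j+1}$ and $z_{i_j+1} \in U_{i_k+1}$.

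Next I bound the derivative $\Phi'(z_{i_j+1}) = h_k'(z_n)/h_j'(z_n)$. Two applications of the Koebe $1/4$ theorem, combined with Claim \ref{claim:M} (yielding $d(z_{i_k+1}, \partial U_{i_k+1}) \leq M_f + D_n$, hence $|h_k'(z_n)| \leq 4(M_f + D_n)/\tau_{i_k+1}$) and Claim \ref{lem:Alpha} (yielding $d(z_{i_j+1}, \partial U_{i_j+1}) \geq \delta_n/\alpha(m)$, hence $|h_j'(z_n)| \geq (\delta_n/\alpha(m))/\tau_{i_j+1}$), together with the observation that at least $k - j$ of the indices $l \in (i_j, i_k]$ lie in $\mathcal{I}_m$ so that $\tau_{i_j+1}/\tau_{i_k+1} = \exp(-\sum_{l=i_j+1}^{i_k} m_l) \leq e^{-(k-j)m}$, yield
\[
|\Phi'(z_{i_j+1})| \leq \rho_n \alpha(m) \, e^{-(k-j) m}.
\]
The factor $9$ in the definition of $E(m) = \lfloor m^{-1} \log(9 \rho_n \alpha(m)) \rfloor$ is calibrated so that $k - j \geq E(m)$ renders this quantity strictly less than $1$ by a definite margin.

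The contradiction then comes from a Schwarz fixed-point argument. Using Koebe distortion on $\Phi$ over a ball $B = B(z_{i_j+1}, R) \subset U_{i_j+1}$ with radius $R$ chosen comparable to $|z_{i_j+1} - z_{i_k+1}|$, the contraction from small $|\Phi'(z_{i_j+1})|$ compensates the center displacement $|z_{i_k+1} - z_{i_j+1}|$ and produces $\Phi(\overline{B}) \subset B$. The Schwarz lemma then yields a unique attracting fixed point $z^* \in B$ of $\Phi = f^{i_k-i_j}$; this is an attracting periodic point of $f$ whose basin contains $B \ni z_{i_j+1}$, and hence contains $z_0$. The hardest part is this last step: the constants $1/2$ in the gap bound and $9$ in the definition of $E(m)$ are calibrated precisely so that the Koebe-distortion estimate yields the strict self-map condition $\Phi(\overline{B}) \subset B$ for an $R$ that simultaneously contains $z_{i_k+1}$ and lies well inside $U_{i_j+1}$. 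All other steps — the conformal factorization of $\Phi$, the derivative bound, and the deduction of an attracting cycle — are essentially mechanical.
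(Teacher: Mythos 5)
Your overall strategy---show $\Phi = f^{i_k - i_j}$ is a strong contraction near $z_{i_j+1}$ and use Schwarz to get an attracting fixed point---is conceptually related to the paper's, but the paper proves the key containment $\overline{f^{i_k-i_j+1}(U_{i_j})}\subset B\bigl(z_{i_k+1},\tfrac{\delta_n}{2\alpha(m)}\bigr)$ \emph{directly} from Theorem \ref{thm:Teichmuller} applied to the annulus $U_{i_k+1}\setminus f^{i_k-i_j+1}(U_{i_j})$, whose modulus is $\log(\tau_{i_k+1}/\tau_{i_j})$. Your derivative estimate, by contrast, has two genuine gaps.

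First, you undercount the modulus contributions. You take the domain to be $U_{i_j+1}$ and bound $|h_j'(z_n)|\ge(\delta_n/\alpha(m))/\tau_{i_j+1}$, so that the ratio appearing in $|\Phi'(z_{i_j+1})|$ is $\tau_{i_j+1}/\tau_{i_k+1}=\exp\bigl(-\sum_{l=i_j+1}^{i_k}m_l\bigr)$. This sum captures only $m_{i_{j+1}},\dots,m_{i_k}$, that is $k-j$ terms of size $\ge m$, so you only get $|\Phi'(z_{i_j+1})|\le\rho_n\alpha(m)e^{-(k-j)m}$. Since $E(m)=\lfloor m^{-1}\log(9\rho_n\alpha(m))\rfloor$ satisfies only $E(m)\,m>\log(9\rho_n\alpha(m))-m$, your bound is $|\Phi'(z_{i_j+1})|<e^{m}/9$, which exceeds $1$ for $m>\log 9$. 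The paper instead uses the inner disk $B(z_n,\tau_{i_j})$ (equivalently $f(U_{i_j})$ rather than $U_{i_j+1}$), which captures $m_{i_j},\dots,m_{i_k}$---one extra term---giving $(k+1-j)m\ge(E(m)+1)m>\log(9\rho_n\alpha(m))$ and a bound of $1/9$. The extra ``$+1$'' is not cosmetic; it is exactly what $E(m)$ is calibrated for.

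Second, even with the corrected bound $|\Phi'(z_{i_j+1})|<1/9$, your final Koebe-distortion step does not close as stated. To obtain $\Phi(\overline{B})\subset B$ for $B=B(z_{i_j+1},R)$ with $R\le\delta_n/\alpha(m)$, you need $\dfrac{|\Phi'(z_{i_j+1})|\,R}{(1-R\alpha(m)/\delta_n)^2}+\dfrac{\delta_n}{2\alpha(m)}<R$; writing $t=R\alpha(m)/\delta_n\in(0,1)$ this becomes $\dfrac{t}{9(1-t)^2}<t-\tfrac12$, which fails for every $t\in(1/2,1)$. The distortion factor swallows the entire margin. Theorem \ref{thm:Teichmuller} avoids this because it bounds the \emph{diameter} of the image set in one shot rather than composing a pointwise derivative bound with a distortion estimate. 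To repair your route you would have to either sharpen the derivative bound well below $1/9$ (which requires increasing the constant in $E(m)$, changing the statement), or replace the Koebe-distortion step with the Teichmüller/modulus estimate---at which point you have recovered the paper's proof.
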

\begin{proof}
The conformal map $f^{n-i_{k}-1}$ maps the annulus: $\ensuremath{\mathcal{A}=U_{i_{k}+1}\backslash f^{i_{k}-i_{j}+1}(U_{i_{j}})}$
onto the geometric ring with radii $\ensuremath{\tau_{i_{k}+1},\ \tau_{i_{j}}}$
around $z_{n}$. According to our choice of $\ensuremath{i_{j},i_{k}}$,
we have obtained a minimum ratio between these radii (i.e. modulus):
\begin{equation}
\bmod\mathcal{A}=\log\frac{\tau_{i_{k}+1}}{\tau_{i_{j}}}\ge m\cdot(k+1-j)\ge m(E(m)+1)\ge\log\left[9\rho_{n}\alpha(m)\right]\label{eq:ErR}
\end{equation}

$z_{i_{k}+1}\in f^{i_{k}-i_{j}+1}(U_{i_{j}})$ and by Claim \ref{claim:M}
there is a point $p\notin U_{i_{k}+1}$ with $|p|\le M_{f}$ so by
Theorem \ref{thm:Teichmuller}, for every $w\in\overline{f^{i_{k}-i_{j}+1}(U_{i_{j}})}$:
\[
|w-z_{i_{k}+1}|\le\left|p-z_{i_{k}+1}\right|\cdot\left(\frac{1}{16}e^{m_{i}}-1\right)^{-1}\le(M_{f}+D_{n})\cdot\left(\frac{9\rho_{n}\alpha(m)}{16}-1\right)^{-1}
\]
and $\rho_{n}\ge16$, $\alpha(m)\ge1$ therefore
\[
|w-z_{i_{k}+1}|\le\frac{16}{9-1}\cdot\frac{M_{f}+D_{n}}{\alpha(m)\rho_{n}}=\frac{\delta_{n}}{2\alpha(m)}
\]
so $\ensuremath{\overline{f^{i_{k}-i_{j}+1}(U_{i_{j}})}\subseteq\overline{B\left(z_{i_{k}+1},\frac{\delta_{n}}{2\alpha(m)}\right)}}$.

Assume now that Claim \ref{claim:E} dose not hold:
\[
\ensuremath{|z_{i_{j}+1}-z_{i_{k}+1}|<\frac{\delta_{n}}{2\alpha(m)}}
\]
then, by Claim \ref{lem:Alpha}:
\[
\overline{f^{i_{k}-i_{j}+1}(U_{i_{j}})}\subset B\left(z_{i_{j}+1},\ \frac{\delta_{n}}{\alpha(m)}\right)\subset f(U_{i_{j}})
\]
After normalization and use of Schwarz lemma we get there exists an
attracting fix point of $\ensuremath{f^{i_{k}-i_{j}+1}}$ which all
$\ensuremath{B(z_{i_{j}+1},\frac{\delta_{n}}{\alpha})}$ attracted
toward this point. Hence $z_{0}$ is in the basin of attraction of
some attractive periodic orbit of $f$, with a contradiction to the
assumptions on $z_{0}$.
\end{proof}
We chose $E(m)$ so that the distance between the elements of the
set
\[
\ensuremath{\mathcal{K}_{m}=\left\{ z_{i_{(k\cdot E)}+1}:\ 0\le k<\frac{\#\mathcal{I}_m}{E(m)}\right\} }
\]
would be at least $\frac{\delta_{n}}{2\alpha}$ from each other. On
the other hand, these elements are also bounded in $\overline{B(0,D_{n})}$.
These two characteristics of $\mathcal{K}_{m}$ help to bound the
number of elements in it:
\begin{claim}
\label{claim:F} For any $m>0$
\begin{equation}
F_{z_{0},n}(m)\le E(m)\left(\rho_{n}\alpha(m)\right)^{2}
\end{equation}
\end{claim}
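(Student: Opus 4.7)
The plan is to use Claim \ref{claim:E} to convert the bound on $F_{z_{0},n}(m)$ into a planar packing problem, then apply an elementary area argument. Claim \ref{claim:E} says that any two indices in $\mathcal{I}_m$ whose positions in $\mathcal{I}_m$ differ by at least $E(m)$ yield iterate points $z_{i+1}$ that are $\frac{\delta_{n}}{2\alpha(m)}$-separated in $\mathbb{C}$. This suggests subsampling $\mathcal{I}_m$ at rate $E(m)$ (exactly the set $\mathcal{K}_m$ already defined in the text), obtaining a set of pairwise well-separated points which must fit inside the bounded region where the orbit lives.

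More concretely, I would first record the trivial combinatorial bound $F_{z_{0},n}(m)=\#\mathcal{I}_m\le E(m)\cdot \#\mathcal{K}_m$, which follows from $\#\mathcal{K}_m=\lceil \#\mathcal{I}_m/E(m)\rceil$. Thus it suffices to show $\#\mathcal{K}_m\le(\rho_{n}\alpha(m))^{2}$. By Claim \ref{claim:E}, distinct points of $\mathcal{K}_m$ are at distance at least $\frac{\delta_{n}}{2\alpha(m)}$, so the open disks of radius $r:=\frac{\delta_{n}}{4\alpha(m)}$ around these points are pairwise disjoint. Since every $z_{i+1}$ satisfies $|z_{i+1}|\le D_{n}-1$, all of these disks are contained in $B(0,D_{n}-1+r)$.

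The area argument then reads
\[
\#\mathcal{K}_m\cdot\pi r^{2}\le \pi(D_{n}-1+r)^{2},
\]
so $\#\mathcal{K}_m\le \bigl(\tfrac{D_{n}-1+r}{r}\bigr)^{2}$. The last small bookkeeping step is to show this is at most $(\rho_{n}\alpha(m))^{2}$. Using $\alpha(m)\ge1$ and $\delta_{n}\le\tfrac{1}{2}$ we have $r\le\tfrac{1}{8}\le M_{f}$, hence $D_{n}-1+r\le D_{n}+M_{f}=\tfrac{\rho_{n}\delta_{n}}{4}$, which gives $\tfrac{D_{n}-1+r}{r}\le \rho_{n}\alpha(m)$ and closes the estimate.

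The only potential obstacle is the constant tracking in the final paragraph: one must make sure that the enclosing disk $B(0,D_{n}-1+r)$ is no larger than $\rho_{n}\delta_{n}/4=D_{n}+M_{f}$, which is exactly where the normalization $M_{f}\ge 1$ (and the fact that the tiny quantity $r$ is absorbed by $M_{f}$) is used. Everything else is a direct packing argument, so no further nontrivial step is required.
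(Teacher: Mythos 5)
Your proposal is correct and follows essentially the same area-packing argument as the paper: both subsample $\mathcal{I}_m$ at rate $E(m)$ to obtain $\mathcal{K}_m$, invoke Claim \ref{claim:E} to get pairwise disjoint disks of radius $\delta_n/(4\alpha(m))$, and compare areas against $B(0,D_n+O(1))$. One small omission: to make $\mathcal{K}_m$ well-defined and the combinatorial bound $\#\mathcal{I}_m\le E(m)\#\mathcal{K}_m$ meaningful, you should note $E(m)\ge 1$ for $m<m_{\max}$ (the paper verifies this from $m_{\max}=2+\log\rho_n<\log(9\rho_n\alpha(m))$) and dispose of $m\ge m_{\max}$ separately via $F_{z_0,n}(m)=0$; otherwise your bookkeeping, which absorbs the tiny radius $r$ into $M_f\ge 1$ to avoid the paper's extra "$+E(m)$" term, is a touch cleaner.
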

\begin{proof}
For $m\ge m_{\max}$ we know $F_{z_{0},n}(m)=0$, so the inequality
holds. $\alpha(m)=\left(1+\frac{2}{m}\right)^{2}>1$ for every $m$
so for $m<m_{\max}=\log\left(e^{2}\rho_{n}\right)<\log\left(9\rho_{n}\alpha(m)\right)$
we have
\begin{align*}
E(m)
& =\left\lfloor m^{-1}\log\left(9\rho_{n}\alpha(m)\right)\right\rfloor
\ge\left\lfloor m_{\max}^{-1}\cdot\log\left(9\rho_{n}\right)+m_{\max}^{-1}\cdot\log1\right\rfloor
\ge1
\end{align*}

Then assume $m<m_{\max}$ so $\frac{\#\mathcal{I}_m}{E(m)}=\frac{F_{z_{0},n}(m)}{E(m)}$
is well defined. Now - geometrically: draw discs with radii $\frac{\delta_{n}}{4\alpha}$
around the points of $\mathcal{K}_{m}$. These discs can not intersect,
because otherwise let $z_{i},\ z_{j}\in\mathcal{K}_{m}$ be the centers
of two such discs with a common point $q$ then

\[
\ensuremath{|z_{i}-z_{j}|\le|z_{i}-q|+|z_{j}-q|<\frac{\delta_{n}}{4\alpha}+\frac{\delta_{n}}{4\alpha}=\frac{\delta_{n}}{2\alpha}}
\]
which is a contradiction to Claim \ref{claim:E}.

So the discs have a total area of $\#\mathcal{K}_{m}\cdot\pi\left(\frac{\delta_{n}}{4\alpha}\right)^{2}=\left\lfloor \frac{F_{z_{0},n}}{E}\right\rfloor \pi\left(\frac{\delta_{n}}{4\alpha}\right)^{2}$.
The centers of these discs are points in the orbit of $z_{0}$ so
by the definition of $D_{n}$ all the discs must be contained inside
the disc $B\left(0,D_{n}+\frac{\delta_{n}}{4\alpha}\right)$ which
has an area of $\pi\left(D_{n}+\frac{\delta_{n}}{4\alpha}\right)^{2}$.
So

\[
\pi\left(D_{n}+\frac{\delta_{n}}{4\alpha}\right)^{2}\ge\left(\frac{F_{z_{0},n}(m)}{E(m)}-1\right)\pi\left(\frac{\delta_{n}}{4\alpha(m)}\right)^{2}
\]

\[
F_{z_{0},n}(m)\le E(m)\left(4\alpha(m)\frac{D_{n}}{\delta_{n}}+1\right)^{2}+E(m)\le E(m)\left(\rho_{n}\alpha(m)\right)^{2}
\]
where the last step is because $\alpha(m)\ge1$ for every $m$, $\delta_{n}\le\frac{1}{2}$
and $M_{f}\ge1$ so $4\frac{M_{f}}{\delta_{n}}\alpha(m)>2$.
\end{proof}
So we got an expression (call it $F(m)$) which depends only on $m$
and bounds $F_{z_{0},n}(m)$. Explicitly:
\[
 F(m) = m^{-1}\log\left[9\rho_{n}\alpha(m)\right]\Bigg(\rho_{n}\alpha(m)\Bigg)^{2} \ge  F_{z_{0},n}(m)
\]
Recall we need to bound $\int_{0}^{\infty}F_{z_{0},n}(m)dm$. Divide
this integration interval into four parts:
\begin{enumerate}
\item $[m_{\max},\infty)$: where $F_{z,n}\equiv0$ so $\int_{m_{\max}}^{\infty}F_{z_{0},n}(m)dm=0$.
\item $[2,m_{\max})$: here $\alpha(m)\le\left(\frac{2}{2}+1\right)^{2}=4$
so we can remove the dependence of $F$ on $m$ and get ($\rho_{n}\ge16)$:
\[
F(m)\le2^{-1}\left(\log\left[36\rho_{n}\right]\right)\Bigg(4\rho_{n}\Bigg)^{2}\le30\rho_{n}^{2}\log\rho_{n}
\]
\[
\int_{2}^{m_{\max}}F_{z_{0},n}(m)dm\le(m_{\max}-2)\cdot30\rho_{n}^{2}\log\rho_{n}=30(\rho_{n}\log\rho_{n})^{2}
\]
\item $\left(0,a_{n}\right)$ with $a_{n}=\text{\ensuremath{\frac{1}{\sqrt[5]{n}}}}$:
for every $m\ge0$, and in particular in this interval, $F_{z_{0},n}(m)=\#\{0\le i<n:\ m_{i}\ge m\}\le n$.
\[
\int_{0}^{a_{n}}F_{z_{0},n}(m)dm\le\int_{0}^{a_{n}}ndm=n^{\frac{4}{5}}
\]
\item $[a_{n},2)$: in this interval $\alpha(m)\le\left(\frac{2}{m}+\frac{2}{m}\right)^{2}\le16m^{-2}$
so
\[
F(m)=m^{-1}\log\left[9\rho_{n}\cdot16m^{-2}\right]\Bigg(\rho_{n}16m^{-2}\Bigg)^{2}\le4c_{1}\rho_{n}^{2}m^{-5}\log\left[\rho_{n}m^{-1}\right]
\]
for some constant $c_{1}>0$. This expression has an explicit primitive
function:
\[
\int_{a_{n}}^{2}F_{z_{0},n}(m)dm\le\int_{a_{n}}^{2}F(m)dm
\]
\[
\le-\left.\left[c_{1}\rho_{n}^{2}m^{-4}\log(\rho_{n}m^{-1})-\frac{c_{1}}{4}\rho_{n}^{2}m^{-4}\right]\right|_{a_{n}}^{2}
\]
\[
\le\left.c_{1}\rho_{n}^{2}m^{-4}\log\left(\rho_{n}m^{-1}\right)\right|_{m=\text{\ensuremath{\frac{1}{\sqrt[5]{n}}}}}+\frac{c_{1}}{4}\rho_{n}^{2}2^{-4}\le2c_{1}\rho_{n}^{2}n^{\frac{4}{5}}\log(\rho_{n}n^{\frac{1}{5}})
\]
\end{enumerate}
Connecting all the intervals we get our bound:
\[
\int_{0}^{\infty}F_{z_{0},n}(m)dm\le30(\rho_{n}\log\rho_{n})^{2}+n^{\frac{4}{5}}+2c_{1}\rho_{n}^{2}n^{\frac{4}{5}}\log\left(\rho_{n}n^{\frac{1}{5}}\right)
\]

For any given $\gamma>0$ we can use $\log x=\gamma^{-1}\log(x^{\gamma})<\gamma^{-1}x^{\gamma}$
to simplify the last bound as (assume $\gamma<1$):

\begin{equation}
\int_{0}^{\infty}F_{z_{0},n}(m)dm\le C\gamma^{-2}\rho_{n}^{2+\gamma}n^{\frac{4+\gamma}{5}}
\end{equation}

for some constant $C>0$.

Finally:

\begin{equation}
|(f^{n})^{'}(z_{0})|\ge\rho_{n}^{-1}\exp\left[-\int_{0}^{\infty}F_{z_{0},n}(m)dm\right]\ge\rho_{n}^{-1}\exp\left[-C\gamma^{-2}\rho_{n}^{2+\gamma}n^{\frac{4+\gamma}{5}}\right]
\end{equation}

which ends the proof of Theorem \ref{thm:DerivativeEstimation}.

\section{Proof of Theorem \ref{thm:MainResult} and maps of bounded type\label{sec:BoundedTypeAndProof}}

Recall that in Theorem \ref{thm:MainResult} there are a map $f$
and a point $z_{0}$ which fulfill the conditions for Theorem \ref{thm:DerivativeEstimation}
for every $n\in\mathbb{N}$ and that there is some $\delta>0$ such
that $\delta_{n}\ge\delta$ for any $n$. Moreover, either $\sup |z_{n}|<\infty$
or $\sup_{s\in\mathcal{S}}|s|<\infty$.

Let us first handle the case of a bounded orbit. Recall $D_{n}=\max_{0\le i\le n}|z_{i}|+1$, define $D=\sup D_{n}<\infty$. Directly from Theorem
\ref{thm:DerivativeEstimation} with $\gamma=\frac{1}{2}$:

\[
|(f^{n})^{'}(z_{0})|\ge\rho_{n}^{-1}\exp\left[-4C\rho_{n}^{\frac{5}{2}}n^{\frac{9}{10}}\right]
\]

$\rho_{n}=4\frac{D_{n}+M_{f}}{\delta_{n}}\le4\frac{D+M_{f}}{\delta}:=\rho>0$
hence

\begin{equation*}
\begin{split}
\underline{\chi}_{f}(z_{0})
&=\liminf_{n\to\infty}\frac{1}{n}\log|(f^{n})^{'}(z_{0})|\\
&\ge-\left(\log\rho\right)\liminf_{n\to\infty}n^{-1}-4C\rho^{\frac{5}{2}}\liminf_{n\to\infty}n^{-\frac{1}{10}}
=0
\end{split}
\end{equation*}

Before the proof of the second case (bounded $\mathcal{S}$), recall
that in Remark \ref{rem:DeltaGrowth} we have mentioned that even
without the conditions $\delta_{n}\ge\delta>0$ and $D_{n}\ge D>0$
one can get $\underline{\chi}_{f}(z_{0})\ge0$ as long as $\frac{\delta_{n}}{D_{n}}\ge\kappa n^{-\beta}$
for some $\kappa>0$, $\beta<\frac{1}{2}$ for every $n$. We can
try to use Theorem \ref{thm:DerivativeEstimation} as it is to show
that:

Assume whichthe condition $\frac{\delta_{n}}{D_{n}}\ge\kappa n^{-\beta}$
holds for some $\kappa$, $\beta$. $D_{n}\ge1$ so
\[
\rho_{n}=4\frac{M_{f}+D_{n}}{\delta_{n}}\le4(M_{f}+1)\frac{D_{n}}{\delta_{n}}\le4(M_{f}+1)\kappa^{-1}n^{\beta}
\]
which by Theorem \ref{thm:DerivativeEstimation} yields ($c_{2}$,
$c_{3}$ and $c_{4}$ are positive constants that depend on $\beta$,
$\gamma$, $\kappa$ and $f$):
\begin{equation*}
\begin{split}
\frac{1}{n}\log|(f^{n})^{\prime}(z_{0})|
&\ge-c_{2}\left(\frac{\log n}{n}\right)-c_{3}\left(n^{-1}n^{\beta(2+\gamma)}n^{\frac{4+\gamma}{5}}\right)\\
&=-c_{4}\left(n^{-\frac{1}{5}+2\beta+\gamma\left(\beta+\frac{1}{5}\right)}\right)
\end{split}
\end{equation*}
so as long as $\beta<\frac{1}{10}$ one can choose $\gamma<\frac{1-10\beta}{1+5\beta}$
and get a negative power, i.e. $\underline{\chi}_{f}(z_{0})\ge0$.
To allow the faster growth $\kappa n^{-\beta}$ with $\beta<\frac{1}{2}$
we must revisit the end of the proof of Theorem \ref{thm:DerivativeEstimation}:

We have divided the integration interval into four parts: $(0,a_{n})$,
$[a_{n},2)$, $[2,m_{\max})$ and $[m_{\max},\infty)$ with $a_{n}=n^{-\frac{1}{5}}$.
One can leave the two last intervals as they are, but replace $a_{n}$
to be a slower decreasing sequence, for example $a_{n}=\frac{1}{\log n}$.
With this choice $\int_{0}^{a_{n}}F_{z_{0},n}(m)dm\le a_{n}\cdot n=\frac{n}{\log n}$
and (assume $n\ge3$)

\begin{equation*}
\begin{split}
\int_{a_{n}}^{2}F_{z_{0},n}(m)dm
\le\int_{a_{n}}^{2}F(m)dm
&\le\left[c_{1}\rho_{n}^{2}m^{-4}\log\left(\rho_{n}m^{-1}\right)\right]_{m=a_{n}}+\frac{c_{1}}{4}\rho_{n}^{2}2^{-4}\\
&\le2c_{1}\rho_{n}^{2}(\log n)^{4}\log(\rho_{n}\log n)
\end{split}
\end{equation*}

so

\begin{equation*}
\begin{split}
\frac{1}{n}\log|(f^{n})^{\prime}(z_{0})|
&\ge-\frac{\log\rho_{n}}{n}-\frac{1}{n}\int_{0}^{\infty}F_{z_{0},n}(m)dm\\
&\ge-\frac{\log\rho_{n}}{n}-a_{n}-2c_{1}\frac{\rho_{n}^{2}(\log n)^{4}\log(\rho_{n}\log n)}{n}-30\frac{\rho_{n}^{2}\left(\log\rho_{n}\right)^{2}}{n}
\end{split}
\end{equation*}

If $\rho_{n}\le4(M_{f}+1)\kappa^{-1}n^{\beta}$ for some $\beta<\frac{1}{2}$
\begin{equation*}
\begin{split}
&\underline{\chi}_{f}(z_{0})\ge\\
&-\liminf_{n\to\infty}\left[c_{2}\frac{\log n}{n}+\frac{1}{\log n}+c_{5}n^{2\beta-1}\left(\log n\right)^{4}(\log n+\log\log n)+c_{6}n^{2\beta-1}\left(\log n\right)^{2}\right]\\
&=0
\end{split}
\end{equation*}

Until this point we have proved Theorem \ref{thm:MainResult} (and
Remark \ref{rem:DeltaGrowth}) only for the case that the orbit is
bounded or at least it approaches infinity slow enough. Next we will
prove that if the singular set $\mathcal{S}$ is bounded then $|z_{n}|$
is no longer something to bother about. This will also end the proof
of Theorem \ref{thm:MainResult}.

The strength of this extension is that $\mathcal{S}$ is a property
of the map $f$ alone, not the specific orbit $z_{0},\dots$. The
dynamics of entire maps with bounded singular set, known as ''entire
maps of bounded type'', were first investigated by Eremenko and Lyubich
\cite{eremenko1992dynamical}. This class of maps contains all polynomials
and exponents, but also maps with infinity critical values such as
$\frac{\sin z}{z}$. It is also closed under compositions. For maps
with bounded singular set (not only entire maps) we prove the following
variation of Theorem \ref{thm:DerivativeEstimation}:
\begin{theorem}
\label{thm:BoundedType}Let\textbf{ $V$, $V^{\prime}$, $\mathcal{S}$}
and $f$ be as in Theorem \ref{thm:MainResult}. Let $z_{0}\in V$
and $n\in\mathbb{N}$ such that the orbit $z_{i}= f^{i}(z_{0})$
is well-defined for any $0\le i\le n$ and $z_{0}$ does not belong
to the basin of an attracting cycle. Define $\delta_{n}=\min\left\{ \frac{1}{2},\ \min_{0\le i\le n}d(z_{i},\mathcal{S}),\ \min_{0\le i\le n}d(z_{i},\mathbb{C}\backslash V')\right\} $.
If $S_{f}=\sup_{s\in\mathcal{S}}|s|+1<\infty$ and $\delta_{n}>0$
then for $\tilde{\rho_{n}}=4\frac{S_{f}+M_{f}}{\delta_{n}}$:
\[
|(f^{n})'(z_{0})|\ge\frac{1}{4}\frac{\delta_{n}}{M_{f}+|z_{0}|}\exp\left[-C\left(\tilde{\rho_{n}}\log\tilde{\rho_{n}}\right)^{2}\frac{n}{\log n}\right]
\]
where $C$ is an absolute constant and $M_{f}$ is the constant that
depends only on $f$ as defined in Theorem \ref{thm:DerivativeEstimation}.
\end{theorem}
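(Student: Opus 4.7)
The plan is to follow the architecture of the proof of Theorem \ref{thm:DerivativeEstimation}, making two modifications that exploit the boundedness of $\mathcal{S}$: a refinement of the geometric estimates in Claims \ref{claim:E}--\ref{claim:F} that replaces the orbit-dependent quantity $D_n$ by the map-dependent quantity $S_f$, and a tighter integration split at the very end. The starting inequality (\ref{eq:BaseDerBound}) together with Claim \ref{claim:M} already yields the prefactor $\tfrac{1}{4}\delta_n/(M_f+|z_0|)$ appearing in the statement, so nothing changes there. The construction of the $U_i$, the Abel-summation identity $\log\tau_0 = \log\delta_n - \int_0^\infty F_{z_0,n}(m)\,dm$, Claim \ref{claim:M}, and Claim \ref{lem:Alpha} (all of which are independent of the orbit once $\delta_n$ is fixed) carry over unchanged; only the Teichmuller cutoff shifts to $m_{\max} = 2 + \log\tilde\rho_n$.

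The core modification is to the Teichmuller step of Claim \ref{claim:E} and the packing step of Claim \ref{claim:F}. The original proof bounded $|p - z_{i_k+1}|\le M_f + |z_{i_k+1}|\le M_f + D_n$; to trade $D_n$ for $S_f$, one has to exploit the singular value $s_{i_k}\in\mathcal{S}\cap\partial f(U_{i_k})$ furnished by Definition \ref{def:U} to relocate the reference point. Concretely, I would rewrite the Teichmuller application routing the triangle inequality through $s_{i_k}$: since $s_{i_k}\in\overline{U_{i_k+1}}$ with $|s_{i_k}|\le S_f$, combining Claim \ref{claim:M} (a point of $\overline{B(0,M_f)}$ missing from $U_{i_k+1}$) with the Koebe distortion on the inverse branch $g$ used in Claim \ref{lem:Alpha} should yield an inequality of the form $|z_{i_j+1} - z_{i_k+1}| \le (M_f + S_f)\cdot\bigl(\tfrac{1}{16}e^{m}-1\bigr)^{-1}$ for $k - j \ge E(m)$, with $E(m) = \lfloor m^{-1}\log(c\tilde\rho_n\alpha(m))\rfloor$ for an absolute constant $c$. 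The Schwarz-lemma no-attracting-cycle step is retained, now applied to the relocated configuration. The disc-packing of Claim \ref{claim:F} is then carried out in a region of diameter $O(S_f)$ rather than $O(D_n)$, producing $F_{z_0,n}(m)\le E(m)(\tilde\rho_n\alpha(m))^2$.

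Finally, I would split $\int_0^\infty F_{z_0,n}(m)\,dm$ using $a_n = 1/\log n$ in place of $n^{-1/5}$, mirroring the calculation in the treatment of Remark \ref{rem:DeltaGrowth} in Section \ref{sec:BoundedTypeAndProof}. The non-zero pieces contribute $\int_0^{a_n}\le n/\log n$, $\int_{a_n}^2 = O\!\bigl(\tilde\rho_n^2(\log n)^4\log(\tilde\rho_n\log n)\bigr)$, and $\int_2^{m_{\max}} = O\!\bigl((\tilde\rho_n\log\tilde\rho_n)^2\bigr)$, and for large $n$ all three merge into $C(\tilde\rho_n\log\tilde\rho_n)^2\cdot n/\log n$. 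Plugging back into (\ref{eq:BaseDerBound}) delivers the desired inequality. The main obstacle is the adaptation of Claim \ref{claim:E}: although the singular value $s_{i_k}$ lies in $B(0,S_f)$, it is a priori not close to $z_{i_k+1}$, so rerouting the Teichmuller/Koebe estimate through $s_{i_k}$ demands a careful re-examination of which modulus bounds remain clean when the reference point inside the annulus $\mathcal{A}$ is shifted, and of how the Schwarz-lemma contradiction from the original Claim \ref{claim:E} must be recast to apply to the new configuration without losing a hidden factor of $\tilde\rho_n$.
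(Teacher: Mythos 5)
There is a genuine gap in the central step. Your high-level strategy is right --- exploit the boundedness of $\mathcal{S}$ to replace $D_n$, keep the prefactor $\tfrac{1}{4}\delta_n/(M_f+|z_0|)$, shift the cutoff to $\tilde m_{\max}=2+\log\tilde\rho_n$, and take $a_n\sim 1/\log n$ --- but the concrete mechanism you propose for eliminating $D_n$ does not deliver what you claim it does. The paper's actual key lemma (Claim \ref{claim:D}) applies Teichm\"uller's theorem to the annulus $U_{i+1}\setminus f(U_i)$ with the three marked points $z_{i+1}$, $s_i\in\mathcal{S}$, and the point $p\notin U_{i+1}$, $|p|\le M_f$ from Claim \ref{claim:M}, obtaining from the \emph{second} branch of the max in Theorem \ref{thm:Teichmuller}
\[
|z_{i+1}-s_i|\cdot 16e^{-\pi^2/m}\le |p-s_i|\le M_f+S_f ,
\]
hence $|z_{i+1}|\le D(m):=S_f+(M_f+S_f)\cdot\tfrac{1}{16}e^{\pi^2/m}$ for every $i\in\mathcal{I}_m$. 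The crucial feature is the factor $e^{\pi^2/m}$, which blows up as $m\to 0$: a small modulus gives essentially no control on how far $z_{i+1}$ can sit from $s_i$. This exponential penalty is not an artifact --- the Teichm\"uller estimate is sharp --- so the disc-packing region in Claim \ref{claim:F} has radius of order $D(m)$, \emph{not} $O(S_f)$, and the resulting bound is
\[
F_{z_0,n}(m)\le \tilde F(m)= m^{-1}\log\!\left[9\tilde\rho_n\bigl(1+\tfrac{1}{16}e^{\pi^2/m}\bigr)\alpha(m)\right]\tilde\rho_n^{\,2}\bigl(1+\tfrac{1}{16}e^{\pi^2/m}\bigr)^{2}\alpha(m)^2 ,
\]
which carries the extra $e^{2\pi^2/m}$ that your proposed $F_{z_0,n}(m)\le E(m)(\tilde\rho_n\alpha(m))^2$ omits. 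Your ``rerouting through $s_{i_k}$'' picture is not enough to suppress this: $s_{i_k}$ is indeed in $B(0,S_f)$, but $|z_{i_k+1}-s_{i_k}|$ can itself be as large as $O(S_f\,e^{\pi^2/m})$, so relocating the reference point simply moves the loss rather than eliminating it. You flagged this yourself as ``the main obstacle''; the honest answer is that it cannot be overcome in the way you sketch, and one must accept the $m$-dependent diameter.

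This changes your estimate on $[a_n,2)$. Because $\tilde F(m)\lesssim\tilde\rho_n^{\,2}\log\tilde\rho_n\cdot m^{-2}e^{c_6/m}$ for an absolute $c_6>0$, the primitive is exponential in $1/m$ and one gets $\int_{a_n}^2\tilde F\le c_6^{-1}\tilde\rho_n^{\,2}\log\tilde\rho_n\cdot e^{c_6/a_n}$; choosing $a_n=3c_6/\log n$ (the precise constant matters here) yields $\int_{a_n}^2 = O\bigl(\tilde\rho_n^{\,2}\log\tilde\rho_n\cdot n^{1/3}\bigr)$, not the $(\log n)^4$-type bound you wrote. The theorem still goes through, since $n^{1/3}=o(n/\log n)$ and the dominant term remains $\int_0^{a_n}\le a_n n\sim n/\log n$, but you cannot derive your claimed intermediate estimates without Claim \ref{claim:D} and the $e^{\pi^2/m}$ factor it forces into $\tilde F$.
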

Of course the map and the orbit satisfy the conditions of Theorem
\ref{thm:DerivativeEstimation} so we can use the same claims here.
In particular we use Eq. \ref{eq:BaseDerBound} again (this time we
do not want to use the bound $|z_{0}|\le D_{n}$):
\[
|(f^{n})^{\prime}(z_{0})|\ge\frac{1}{4}\frac{\delta_{n}}{M_{f}+|z_{0}|}\exp\left[-\int_{0}^{\infty}F_{z_{0},n}(m)dm\right]
\]

The main difference is that instead of using a constant bound for
the orbit ($D$), for these maps we can bound the orbit as a function
of $m$:
\begin{claim}
\label{claim:D}For every $m>0$ and every $i\in\mathcal{I}_m$:
\begin{equation}
|z_{i+1}|\le D(m)
\end{equation}

where $D(m)=S_{f}+(M_{f}+S_{f})\cdot\frac{1}{16}e^{\frac{\pi^{2}}{m}}$.
\end{claim}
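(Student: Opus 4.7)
The plan is to exploit the annulus $\mathcal{A} = U_{i+1} \setminus \overline{f(U_i)}$ once more, but this time using the second branch of the Teichmuller bound in Theorem~\ref{thm:Teichmuller}, namely $|e_3 - e_1| \ge |e_2 - e_1| \cdot 16 e^{-\pi^2/m}$, which is precisely the estimate that turns a lower bound on the modulus into an upper bound on how far one of the separated points can stray from a nearby reference point.

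First, since $i \in \mathcal{I}_m$, we have $m_i \ge m > 0$, so Definition~\ref{def:U} supplies a singular value $s_i \in \partial f(U_i) \cap \mathcal{S}$ with $|s_i| \le S_f - 1$, while Claim~\ref{claim:M} supplies a point $p \notin U_{i+1}$ with $|p| \le M_f$. The bounded complementary component of $\mathcal{A}$ contains $\overline{f(U_i)}$, hence both $z_{i+1}$ and $s_i$; the unbounded component contains $\mathbb{C}\setminus U_{i+1}$, hence both $p$ and $\infty$. Thus $\mathcal{A}$ separates $\{s_i, z_{i+1}\}$ from $\{p, \infty\}$ and has modulus $m_i \ge m$.

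Applying Theorem~\ref{thm:Teichmuller} with $e_1 = s_i$, $e_2 = z_{i+1}$, $e_3 = p$ and using the $16 e^{-\pi^2/m}$ term in the $\max$, we obtain
\[
|p - s_i| \;\ge\; 16\, e^{-\pi^2/m_i}\,|z_{i+1} - s_i| \;\ge\; 16\, e^{-\pi^2/m}\,|z_{i+1} - s_i|,
\]
so $|z_{i+1} - s_i| \le \tfrac{|p|+|s_i|}{16}\,e^{\pi^2/m} \le \tfrac{M_f + S_f}{16}\,e^{\pi^2/m}$. The triangle inequality then gives
\[
|z_{i+1}| \;\le\; |s_i| + |z_{i+1} - s_i| \;\le\; (S_f - 1) + \tfrac{M_f + S_f}{16}\,e^{\pi^2/m} \;<\; D(m),
\]
as claimed.

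The only delicate point, and the one I would flag as the main obstacle, is that Theorem~\ref{thm:Teichmuller} as stated requires the four marked points to lie in the two complementary components of the annulus rather than on its boundary, whereas $s_i$ lies on $\partial f(U_i)$. This is handled routinely either by replacing $s_i$ with a sequence of interior points in $f(U_i)$ converging to it and passing to the limit in the Teichmuller inequality (which is continuous in the marked points), or by thickening $f(U_i)$ slightly and letting the thickness tend to zero; neither perturbation affects the constants and the bound $D(m)$ is recovered.
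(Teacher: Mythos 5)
Your proof is correct and follows essentially the same route as the paper: apply Theorem~\ref{thm:Teichmuller} to the annulus $U_{i+1}\setminus \overline{f(U_i)}$ with the triple $(s_i, z_{i+1}, p)$, pick the $16e^{-\pi^2/m}$ branch of the $\max$, and finish with the triangle inequality through $s_i$. The one remark worth making is that the ``delicate point'' you flag at the end is not actually an obstacle: in the statement of Theorem~\ref{thm:Teichmuller}, ``separates $\{e_1,e_2\}$ from $\{e_3,\infty\}$'' asks only that the two pairs lie in the two (closed) complementary components of the open annulus $\mathcal{A}$, and $s_i\in\partial f(U_i)\subset \overline{f(U_i)}$ does lie in the bounded complementary component, so the theorem applies verbatim with no limiting or thickening argument needed.
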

\begin{proof}
Recall that by the definition of modulus
\[
\bmod(U_{i+1}\backslash f(U_{i}))=\log\frac{\tau_{i+1}}{\tau_{i}}=m_{i}\ge m
\]
also recall that $z_{i+1}\in f(U_{i})$ and there is some $s_{i}\in\partial f(U_{i})\cap\mathcal{S}$
for every $i$ with $m_{i}>0$, and last - we have built $M_{f}$
to have (Claim \ref{claim:M}) $p\notin U_{i+1}$ with $|p|\le M_{f}$,
so Theorem \ref{thm:Teichmuller} yields:
\begin{equation}
|z_{i+1}-s_{i}|\cdot\max\left\{ \frac{1}{16}e^{m}-1,\ 16e^{-\frac{\pi^{2}}{m}}\right\} \le\left|p-s_{i}\right|\le M_{f}+S_{f}\label{eq:ZminusSforD}
\end{equation}
and the claim follows by inverting $16e^{-\frac{\pi^{2}}{m}}$ and
$|z_{i+1}|\le|z_{i+1}-s_{i}|+|s_{i}|\le|z_{i+1}-s_{i}|+S_{f}$.
\end{proof}
In this case we can get ''new'' $m_{\max}$ that does not depend
on $|z_{n}|$:
\begin{corollary}
For every
\begin{equation}
\ensuremath{m>\tilde{m}_{\max}=2+\log\tilde{\rho_{n}}}
\end{equation}
it holds that $\ensuremath{F_{z_{0},n}(m)=0}$.
\end{corollary}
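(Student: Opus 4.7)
The plan is to adapt the proof of the earlier analogous corollary (for $m_{\max}=2+\log\rho_n$) line by line, replacing the crude bound $|z_{i+1}|\le D_n$, which implicitly pulled the orbit-dependent quantity $D_n$ into $\rho_n$, by the intrinsic estimate of Claim \ref{claim:D}, which depends only on $m_i$, $M_f$ and $S_f$. Once this substitution is made, $\tilde\rho_n=4(S_f+M_f)/\delta_n$ takes over the role that $\rho_n$ played before, and $\tilde m_{\max}=2+\log\tilde\rho_n$ takes the role of $m_{\max}$.

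Concretely, I would argue by contradiction: suppose $F_{z_0,n}(m)\neq 0$ for some $m>\tilde m_{\max}$, so that there exists $i<n$ with $m_i\ge m>\tilde m_{\max}$. As before, the annulus $U_{i+1}\setminus f(U_i)$ has modulus $m_i$ and, since $m_i>0$, it separates $\{z_{i+1},s_i\}$ (with $s_i\in\mathcal S\cap\partial f(U_i)$ and $|s_i-z_{i+1}|\ge\delta_n$) from $\{p,\infty\}$ (with $|p|\le M_f$, from Claim \ref{claim:M}). The first alternative in Theorem \ref{thm:Teichmuller} then yields
\[
|p-z_{i+1}|\ge\delta_n\bigl(\tfrac{1}{16}e^{m_i}-1\bigr)\ge\delta_n\bigl(\tfrac{e^{2}}{16}\tilde\rho_n-1\bigr)=\tfrac{e^{2}}{4}(M_f+S_f)-\delta_n.
\]

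For the matching upper bound I would combine $|p|\le M_f$ with Claim \ref{claim:D}, applied with the \emph{same} $m_i$ rather than with an orbit-wide constant:
\[
|p-z_{i+1}|\le M_f+|z_{i+1}|\le M_f+D(m_i)=M_f+S_f+(M_f+S_f)\tfrac{1}{16}e^{\pi^{2}/m_i}.
\]
Since $M_f,S_f\ge 1$ and $\delta_n\le\tfrac12$, one verifies (exactly as for $\rho_n$) that $\tilde\rho_n\ge 16$, hence $m_i>\tilde m_{\max}\ge 2+\log 16$. That pins $\pi^{2}/m_i$ down to a value small enough for the right-hand side above to be at most a modest multiple of $M_f+S_f$, while the left-hand side is essentially $\tfrac{e^{2}}{4}(M_f+S_f)$. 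A direct numerical comparison (absorbing the $-\delta_n$ via $\delta_n\le\tfrac14(M_f+S_f)$) produces the required contradiction, proving $F_{z_0,n}(m)=0$ for every such $m$.

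The main obstacle, as in the analogous $m_{\max}$ corollary, is making the constants line up; the extra wrinkle here is that Claim \ref{claim:D} brings in an $e^{\pi^{2}/m_i}$ factor that does not vanish as quickly as one might like, and this is precisely why $\tilde m_{\max}$ is defined with the additive $2$ and why the a priori lower bound $\tilde\rho_n\ge 16$ must be exploited.
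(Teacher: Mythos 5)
Your argument is correct, but it routes through Claim~\ref{claim:D} where the paper does not need to. The paper's proof applies the Teichm\"uller bound, already packaged in Eq.~\ref{eq:ZminusSforD}, with the roles chosen so that $e_1=s_i$: the inequality then reads $|z_{i+1}-s_i|\bigl(\tfrac{1}{16}e^{m}-1\bigr)\le |p-s_i|\le M_f+S_f$, and the right-hand side is immediate from $|p|\le M_f$ and $s_i\in\mathcal S$, so no bound on $|z_{i+1}|$ is ever needed. Plugging in $|z_{i+1}-s_i|\ge\delta_n$ gives $e^m\le 16+4\tilde{\rho_n}\le 5\tilde{\rho_n}<e^{2}\tilde{\rho_n}$ in one line. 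You instead take $e_1=z_{i+1}$, so the upper bound you need is $|p-z_{i+1}|\le M_f+|z_{i+1}|$, which forces you to invoke Claim~\ref{claim:D} and to carry the factor $\tfrac{1}{16}e^{\pi^{2}/m_i}$. Your numerical check does close: from $m_i>\tilde m_{\max}\ge 2+\log 16\approx 4.77$ one gets $\tfrac{1}{16}e^{\pi^{2}/m_i}<\tfrac12$, and $\tfrac{e^{2}}{4}(M_f+S_f)-\delta_n\ge\tfrac{e^{2}-1}{4}(M_f+S_f)>\tfrac{3}{2}(M_f+S_f)$, so the lower bound beats $(M_f+S_f)\bigl(1+\tfrac{1}{16}e^{\pi^{2}/m_i}\bigr)$. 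But this margin is much tighter than necessary, and you rightly flag the constants as the delicate point. Swapping the roles of $z_{i+1}$ and $s_i$ in the Teichm\"uller comparison eliminates Claim~\ref{claim:D} from the argument entirely and is the cleaner choice.
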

\begin{proof}
Let $m>0$ and $i\in\mathcal{I}_m$, so by Eq. \ref{eq:ZminusSforD} again (this
time with the second bound):
\[
|z_{i+1}-s_{i}|\cdot\left(\frac{1}{16}e^{m}-1\right)\le M_{f}+S_{f}
\]
but $|z_{i+1}-s_{i}|\ge\delta_{n}$ by the definition of $\delta_{n}$
so ($\tilde{\rho_{n}}=4\frac{M_{f}+S_{f}}{\delta_{n}}\ge16$):
\[
e^{m}\le16+16\frac{M_{f}+S_{f}}{|z_{i+1}-s_{i}|}\le5\tilde{\rho_{n}}<e^{2}\tilde{\rho_{n}}
\]
\end{proof}
The function $\alpha(m)$ does not depend on $D_{n}$ so no change
is needed in Claim \ref{lem:Alpha}. In Claim \ref{claim:E} we used
$D_{n}$ to bound $|z_{i_{k}+1}|$ for $i_{k}\in\mathcal{I}_m$, so we can use
$|z_{i_{k}+1}|\le D(m)$ instead. Finally, we have built the bound
$F(m)\ge F_{z_{0},n}(m)$ by using the fact that the elements of the
form $\mathcal{K}_{m}=\left\{ z_{i_{0}+1},z_{i_{E}+1},\dots\right\} $
are bounded in the disk $\overline{B(0,D_{n})}$. Since $i_{0},i_{E},\dots\in\mathcal{I}_{m}$
- this set is bounded in the disk $\overline{B(0,D(m))}$ too. So
all we have to do is to use $|z_{i+1}|\le D(m)$ instead of $|z_{i+1}|\le D_{n}$.
So the function $F$ becomes:
\[
F_{z_{0},n}(m)\le\tilde{F}(m)=m^{-1}\log\left[9\cdot4\frac{D(m)+M_{f}}{\delta_{n}}\alpha(m)\right]\Bigg(4\frac{D(m)+M_{f}}{\delta_{n}}\alpha(m)\Bigg)^{2}
\]
\[
=m^{-1}\log\left[9\tilde{\rho_{n}}\left(1+\frac{1}{16}e^{\frac{\pi^{2}}{m}}\right)\alpha(m)\right]\tilde{\rho_{n}}^{2}\left(1+\frac{1}{16}e^{\frac{\pi^{2}}{m}}\right)^{2}\alpha(m)^{2}
\]

Again, split into four intervals:
\begin{enumerate}
\item $[\tilde{m}_{\max},\infty)$ : $F_{z_{0},n}\equiv0$.
\item $[2,\tilde{m}_{\max})$ : $\alpha(m)\le4$ and $\frac{1}{16}e^{\frac{\pi^{2}}{m}}<10$
so:
\[
\tilde{F}(m)\le2^{-1}\log\left[400\tilde{\rho_{n}}\right]\Bigg(200\tilde{\rho_{n}}\Bigg)^{2}
\]
\[
\int_{2}^{\tilde{m}_{\max}}F_{z_{0},n}(m)dm\le(\tilde{m}_{\max}-2)c_{5}\tilde{\rho_{n}}^{2}\log\tilde{\rho_{n}}=c_{5}(\tilde{\rho_{n}}\log\tilde{\rho_{n}})^{2}
\]
\item $[a_{n},2)$ : $\alpha(m)\le16m^{-2}$, with some algebra one can get
\[
\tilde{F}(m)\le\tilde{\rho_{n}}^{2}\log\left(\tilde{\rho_{n}}\right)\cdot m^{-2}\cdot e^{c_{6}m^{-1}}=\tilde{\rho_{n}}^{2}\log\left(\tilde{\rho_{n}}\right)\cdot\frac{\mathrm{d}}{\mathrm{d}m}\left[-c_{6}^{-1}e^{c_{6}m^{-1}}\right]
\]
\[
\int_{a_{n}}^{2}F_{z_{0},n}(m)dm\le\int_{a_{n}}^{2}\tilde{F}(m)dm\le c_{6}^{-1}\tilde{\rho_{n}}^{2}\log\left(\tilde{\rho_{n}}\right)\left[-e^{c_{6}m^{-1}}\right]_{a_{n}}^{2}\le
\]
\[
\le c_{6}^{-1}\tilde{\rho_{n}}^{2}\log\left(\tilde{\rho_{n}}\right)\cdot e^{c_{6}a_{n}^{-1}}
\]
we can choose then $a_{n}=\frac{3c_{6}}{\log n}$ and get:
\[
\int_{a_{n}}^{2}F_{z_{0},n}(m)dm\le c_{6}^{-1}\tilde{\rho_{n}}^{2}\log\tilde{\rho_{n}}\cdot\sqrt[3]{n}
\]
\item $(0,a_{n})$ : Here (and everywhere) $F_{z_{0},n}\le n$ so $\int_{0}^{a_{n}}F_{z_{0},n}(m)dm\le3c_{6}\frac{n}{\log n}$.
\end{enumerate}
For $n>1$, $\frac{n}{\log n}>\sqrt[3]{n}>1$ one can find a constant
$C$ such that the sum of all these parts is $\le C\tilde{\rho_{n}}^{2}\log^{2}\tilde{\rho_{n}}\frac{n}{\log n}$
so it ends the proof of Theorem \ref{thm:BoundedType}. If $\inf_{n}\delta_{n}=\delta>0$
then $\frac{1}{n}\int_{0}^{\infty}F_{z_{0},n}(m)dm\le C\left(\frac{S_{f}+M_{f}}{\delta}\right)^{3}\frac{1}{\log n}\to0$
so it is the end of Theorem \ref{thm:MainResult} too.

As for Remark \ref{rem:DeltaGrowth} we can again take $a_{n}$ that
decreases even slower, say $a_{n}=\frac{c_{6}}{\log\log n}$ and get
\[
\frac{1}{n}\int_{a_{n}}^{2}F_{z_{0},n}(m)dm\le c_{6}^{-1}\tilde{\rho_{n}}^{2}\log\tilde{\rho_{n}}\cdot\frac{\log n}{n}
\]

which tends to zero as long as $\tilde{\rho_{n}}=4\frac{M_{f}+S_{f}}{\delta_{n}}\le\kappa n^{\beta}$
for some $\beta<\frac{1}{2}$ and $\kappa>0$.

\subsection*{Acknowledgements}
I am grateful to my M.Sc. supervisor, Prof. Levin Genadi, who assisted
me in each step to complete the research and writing of my master's
thesis \cite{wein2017lyp} and the publication of its results in this
paper. Thanks for hours of guidance and for introducing me to this
wonderful research field. I would also like to thank the referee who
made the publication of this paper possible by constructive, insightful
recommendations.

\end{document}